\theoremstyle{plain}
\newtheorem{theorem}{Theorem}[section]
\theoremstyle{plain}
\newtheorem{proposition}[theorem]{Proposition}
\theoremstyle{plain}
\newtheorem{lemma}[theorem]{Lemma}
\theoremstyle{plain}
\theoremstyle{plain}
\theoremstyle{plain}
\newtheorem{definition}[theorem]{Definition}
\theoremstyle{plain}
\theoremstyle{remark}
\theoremstyle{remark}
\newtheorem{example}[theorem]{Example}
\theoremstyle{remark}
\newcommand{\B}{\mathcal{B}}
\newcommand{\C}{\mathbb{C}}
\newcommand{\Cstar}{\mathrm{C}^*}
\newcommand{\CstarU}{\mathrm{C}^*_\mathrm{u}}
\newcommand{\Hil}{\mathcal{H}}
\newcommand{\N}{\mathbb{N}}
\newcommand{\op}{\mathrm{op}}
\newcommand{\R}{\mathbb{R}}
\title
[Uniform Roe algebras]
{Finite-dimensional approximation properties for uniform Roe algebras}
\author{Hiroki Sako}
\address
{Faculty of engineering, Niigata University, Niigata 950-2181, Japan}
\email
{sako@eng.niigata-u.ac.jp}
\subjclass[2010]{46L05, 20F65, 51F99}
\begin{document}

\begin{abstract}
We study property A for metric spaces $X$ with bounded geometry introduced by Guoliang Yu.
Property A is an amenability-type condition, which is less restrictive than amenability for groups.
The property has a connection with finite-dimensional approximation properties in the theory of operator algebras. It has been already known that property A of a metric space $X$ with bounded geometry is equivalent to
nuclearity of 
the uniform Roe algebra $\CstarU(X)$.
We prove that exactness and local reflexivity of $\CstarU(X)$
also characterize property A of $X$.
\end{abstract}

\keywords{Property A; Local reflexivity; nuclearity; exactness}

\maketitle

\section{Introduction}
The notion of amenability connects functional analytic aspects of groups to geometrical aspects of them.
It is characterized by the existence of F{\o}lner sets, which are finite subsets with relatively small boundaries (\cite{Folner}).
In the theory of operator algebras, 
there are a number of characterizations of amenability.
A typical example is Lance's theorem. He proved that
a discrete group $G$ is amenable if and only if the reduced
group C$^*$-algebra $\Cstar_\mathrm{red}(G)$ is nuclear
(\cite[Theorem 4.2]{Lance}). 

Property A introduced by Guoliang Yu is an amenability-type condition for discrete metric spaces.
Yu proved that property A is a sufficient condition for the
coarse Baum--Connes conjecture (\cite[Theroem 1.1 and Theorem 2.7]{Yu:CoarseHilbert}).
An operator algebraic characterization was given by Skandalis, Tu and Yu.
Property A of a metric space $X$ with bounded geometry is equivalent to nuclearity of the uniform Roe algebra $\CstarU(X)$ (\cite[Theorem 5.3]{SkandalisTuYu}).

The uniform Roe algebra
$\CstarU(X)$ is a natural linear representation of $X$.
It is also called the translation C$^*$-algebra based on the Hilbert space $\ell_2 (X)$ (Roe's lecture note \cite[section 4.4]{RoeLectureNote}).
In the theory of C$^*$-algebras,
nuclearity, exactness, and local reflexivity are important.
The purpose of this paper is to show these properties are equivalent for the algebra $\CstarU(X)$.

\begin{theorem}\label{Theorem Main Intro}
For a metric space $X$ with bounded geometry,
the following conditions are equivalent:
\begin{enumerate}
\item\label{Theorem Intro A of Yu}
The space $X$ has property A.
\item\label{Theorem Intro Nuclear}
The uniform Roe algebra $\CstarU(X)$ is nuclear.
\item\label{Theorem Intro Exact}
The algebra $\CstarU(X)$ is exact.
\item\label{Theorem Intro Local Reflexive}
The algebra $\CstarU(X)$ is locally reflexive.
\item\label{item: ONLP}
The space $X$ has the operator norm localization property in \cite{ONLPoriginal}.
\end{enumerate}
\end{theorem}
The equivalence between property A and nuclearity of $\CstarU(X)$ was proved by Skandalis, Tu and Yu.
By the theory of  C$^*$-algebra, nuclearity implies exactness, and exactness implies local reflexivity.
It is proved in \cite{SakoONLP} that the operator norm localization property is equivalent to property A.
We will deduce the operator norm localization property of $X$ from local reflexivity of $\CstarU(X)$ in Theorem \ref{theorem: main}.

The case of discrete groups has been studied by many authors.
For a countable discrete group $G$, we can define a metric function which makes $G$ a metric space with bounded geometry. 
The uniform Roe algebra of $G$ is isomorphic to $\ell_\infty(G) \rtimes_\mathrm{red} G$.
The notion of an exact group was introduced by  Kirchberg and Wassermann. 
It is characterized by exactness of the reduced group C$^*$-algebra $\Cstar_\mathrm{red} (G)$ (\cite[Theorem 5.2]{KirchbergWassermann}). 
The class of exact groups is much larger than that of amenable groups.
Ozawa proved that exactness of $\Cstar_\mathrm{red} (G)$ implies nuclearity of the uniform Roe algebra $\CstarU(G)$ (\cite[Theorem 3]{OzawaExactness}). 
As a corollary, exactness and nuclearity are equivalent for
the uniform Roe algebra $\CstarU(G)$. 
Nuclearity of $\CstarU(G)$ is equivalent to property A of the metric space $G$ (Higson and Roe \cite[Theorem 1.1]{HigsonRoe}, Anantharaman-Delaroche and Renault \cite{ADR}).
It follows that exactness of the discrete group $G$ is equivalent to property A of the metric space $G$.

For a general metric space $X$ with bounded geometry, 
equivalence between property A of $X$ and exactness of $\CstarU(X)$ had been an open problem, until the draft of this paper was announced.
In \cite[Corollary 30]{PaperBrodzkiNibloWright}, Brodzki, Niblo and Wright proved it in the case that the space $X$ uniformly embeds into a discrete group.

\section{Preliminaries}

\subsection{Notations}
Let $(X, d)$ be a metric space.
For $x, y \in X$, the distance $d(x, y)$ can be $\infty$.
For $x \in X$ and a positive number $R$, denote by $N(x; R)$ the ball 
$\{y \in X \ |\ d(x, y) \le R\}$.
We assume that for every positive number $R$, $\sup_{x \in X} \sharp N(x; R) < \infty$.
If this condition holds, then we say that $X$ has {\it bounded geometry}.
The relation $\{(x, y) \in X^2 \ |\ d(x, y) < \infty \}$ is an equivalence relation on $X$.
An equivalence class with respect to this relation is called a {\it coarse connected component}.
Note that for a subset $Y \subset X$, if the values of $d |_{Y \times Y}$ are real numbers,
then $Y$ is contained in a coarse connected component of $X$.
A subset $C \subset X^2$ is said to be {\it controlled}, if $\sup \{d(x, y) \ |\ (x, y) \in C\} < \infty$.

\subsection{Property A}
Amenability for a discrete group is characterized by
the existence of F{\o}lner sets, which are finite subsets with relatively small boundaries.
A simple replacement of the F{\o}lner condition
does not provide an appropriate notion of amenability for metric spaces.
It requires a family of F{\o}lner sets.

\begin{definition}[Definition 2.1 in \cite{Yu:CoarseHilbert}]
A discrete metric space $(X, d)$ 
is said to have {\rm property A}
if for every positive number $\epsilon$ and every positive number $R$, 
there exist
a positive number $S$ 
and a family of finite subsets 
$(A_x)_{x \in X}$ of $X \times \N$ such that
\begin{enumerate}
\item
for every $x \in X$,
$A_x \subset N(x; S) \times \N$;
\item
for every $x \in X$,
$(x, 1) \in A_x$;
\item
for every $x, y \in X$,
if $d(x, y) < R$, then the symmetric difference $A_x \bigtriangleup A_y$ satisfies
$\sharp(A_x \bigtriangleup A_y) 
< \epsilon \sharp(A_x \cap A_y)$.
\end{enumerate}
\end{definition}

\begin{example}
\begin{itemize}
\item
Every discrete subgroup of the general linear group over a field has property A (Guentner, Higson, and Weinberger \cite{GuentnerHigsonWeinberger}).

\item
If two groups have property A, 
then an amalgamated free product has property A (Tu \cite[Corollary 9.5]{Tu}, Dykema \cite{Dykema}, Ozawa \cite[Corollary 2]{BoundaryAme}). 

\item
If a group $\Gamma$ is a hyperbolic relative to subgroups with property A, 
then $\Gamma$ has property A (Ozawa \cite[Corollary 3]{BoundaryAme}). 

\item
Asymptotic dimension of finitely generated groups was introduced by Gromov \cite[Section 1.E]{Gromov}.
This notion can be applied for metric spaces with bounded geometry.
If a metric space $X$ with bounded geometry has finite asymptotic dimension, then $X$ has property A
(Higson and Roe \cite[Lemma 4.3]{HigsonRoe}).
\end{itemize}
\end{example}

\subsection{Uniform Roe algebra}

Every bounded linear operator $a$ on $\ell_2(X)$ is uniquely determined by the family of complex numbers
\[[a_{x, y}]_{x, y \in X} = [\langle a \delta_y, \delta_x \rangle ]_{x, y \in X}.\]
We call $[a_{x, y}]_{x, y \in X}$ the matrix expression of $a$.
For a positive number $R$,
we say that the {\it propagation} of $a$ is at most $R$, if
for every $x, y \in X$,
\[d(x, y) > R \Rightarrow a_{x, y} = 0.\]
Denote by $E_R$ the space of all the operators on $\ell_2(X)$ whose propagation is at most $R$.
We call an element in $\cup_R E_R$ an operator with finite propagation.
The space $\cup_R E_R$ corresponds to the space of bounded functions on controlled sets.

\begin{lemma}[Conclusion of Lemma 4.27 in \cite{RoeLectureNote}]
\label{lemma: operator given by bounded controlled function}
For every positive number $R$ and 
for every bounded function $\zeta$ on $\left\{(x, y) \in X^2 \ |\ d(x, y) \le R \right\}$,
there exists an element $a$ of $E_R$ such that $[a_{x, y}]_{x, y \in X} = [\zeta(x, y) ]_{x, y \in X}$.
\end{lemma}

The uniform Roe algebra $\CstarU(X)$ is defined as the operator norm closure of $\cup_R E_R$. 
Since $\cup_R E_R$ is closed under multiplication and adjoint,
the uniform Roe algebra is a C$^*$-algebra.
Note that coarse connected components correspond to central projections in $\CstarU(X)$.

A coarse geometric property of $X$ sometimes implies an operator algebraic property of $\CstarU(X)$. The following is a typical example:
\begin{theorem}
[Theorem 5.3 in \cite{SkandalisTuYu}]
Let $X$ be a metric space with bounded geometry. The space $X$ has property A if and only if $\CstarU(X)$ is nuclear.
\end{theorem}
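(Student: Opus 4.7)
The plan is to prove the two implications separately, with the forward direction being a relatively direct CPAP construction and the reverse requiring a more technical extraction.

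For the forward direction, I would use the Hilbert-space characterization in Lemma \ref{Lemma Property A Hulanicki}: for each $\epsilon > 0$ and controlled set $T$, there exist unit vectors $\{\eta_x\}_{x \in X} \subset \ell^2 X$ whose support relation is controlled and which satisfy $\|\eta_x - \eta_y\| < \epsilon$ on $T$. The positive-definite kernel $k(x,y) = \langle \eta_x, \eta_y \rangle$ then has finite propagation and is uniformly $\epsilon^2 / 2$-close to $1$ on $T$. Its Schur multiplier $M_k$ is a unital completely positive map on $C^*_\mathrm{u}(X)$ converging to the identity in point-norm as $T$ grows and $\epsilon$ shrinks. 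Bounded geometry now lets me factor each $M_k$ through a finite-dimensional matrix algebra: partition $X$ into finitely many colour classes whose elements are pairwise outside the propagation of $M_k$, producing block decompositions in $M_N(\mathbb{C})$ with $N$ uniformly bounded. This establishes the CPAP, hence nuclearity.

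For the reverse direction, the natural route is through the coarse groupoid $G(X)$ of Skandalis--Tu--Yu: an {\'e}tale, locally compact, Hausdorff groupoid with compact unit space (the Stone--{\v C}ech boundary of $X$) such that $C^*_r(G(X))$ is canonically isomorphic to $C^*_\mathrm{u}(X)$. By the theorem of Anantharaman-Delaroche and Renault, nuclearity of the reduced groupoid C$^*$-algebra of such a groupoid forces topological amenability of $G(X)$. A net of compactly supported continuous positive type functions on $G(X)$ approximating $1$ uniformly on compacta restricts to positive-definite kernels on $X^2$ with controlled support tending pointwise to $1$ on every controlled set. A standard Gelfand--Naimark--Segal step then yields almost invariant unit vector fields indexed by $X$, and property A follows from Lemma \ref{Lemma Property A Hulanicki}.

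The main obstacle will be the reverse direction, specifically the propagation control. Starting from abstract CPAP factorisations $\phi_\alpha \colon C^*_\mathrm{u}(X) \to M_{n_\alpha}(\mathbb{C})$ and $\psi_\alpha \colon M_{n_\alpha}(\mathbb{C}) \to C^*_\mathrm{u}(X)$, there is a priori no reason that $\psi_\alpha \circ \phi_\alpha$ respects the coarse structure. The groupoid framework replaces this by a continuity plus compact support argument on $G(X)$, but one must still verify that $G(X)$ satisfies the hypotheses of the Anantharaman-Delaroche and Renault theorem (compact unit space and trivial isotropy in the coarse setting) and that amenability of $G(X)$ genuinely translates back into the Hilbertian, uniformly-controlled almost-invariant vector fields on $X$ required by Lemma \ref{Lemma Property A Hulanicki}.
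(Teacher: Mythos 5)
The paper does not actually prove this statement: it is quoted verbatim from Skandalis--Tu--Yu, and the author explicitly remarks that only the implication from property A to nuclearity is needed for the main theorem, referring for that direction to a verbatim translation of Roe's Proposition 11.41 and to the preprint \cite{SakoNote}. Your proposal reconstructs the standard proof of the full equivalence, and both halves follow essentially the same route as the cited sources: the forward direction via Schur multipliers of the positive-definite kernels $k(x,y)=\langle\eta_x,\eta_y\rangle$ coming from Lemma \ref{Lemma Property A Hulanicki}, and the converse via the coarse groupoid $G(X)$ over $\beta X$ together with the Anantharaman-Delaroche--Renault characterization of amenability for \'etale groupoids, which is precisely the Skandalis--Tu--Yu argument. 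One imprecision in the forward direction: $M_k$ does not factor through a single finite-dimensional algebra $M_N(\mathbb{C})$; the standard factorization (cutting down to the supports $\mathrm{supp}(\eta_x)$, which have uniformly bounded cardinality $N$ by bounded geometry) passes through $\ell^\infty(X)\otimes M_N(\mathbb{C})$, an infinite product of uniformly sized blocks. That algebra is nuclear, being abelian tensor finite-dimensional, and approximate point-norm factorization of the identity through nuclear algebras still yields nuclearity, so the gap is cosmetic rather than structural. Your closing caution about the converse is also well placed, but note that $G(X)$ need not have trivial isotropy; what matters for the Anantharaman-Delaroche--Renault theorem is that $G(X)$ is \'etale with compact unit space, which it is.
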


\subsection{Operator norm localization property}

Chen, Tessera, Wang, and Yu defined 
{\it the operator norm localization property}
in \cite[Section 2]{ONLPoriginal}. 
The original definition is given for a general metric space $X$.
For a metric space $X$ with bounded geometry, we use the following condition as the definition.
\begin{definition}
A metric space $X$ with bounded geometry is said to have {\rm the operator norm localization property}, if 
for every $0 < c < 1$ and every positive number $R$,
the following condition $(\alpha)$ holds:
there exists a positive number $S$ satisfying that
for every operator $a \in E_R$,
there exists a unit vector $\eta \in \ell_2(X)$ such that the diameter of
$\mathrm{supp}(\eta)$ is at most $S$ and
$c \| a \| \le \| a \eta \|$.
\end{definition}

The most important point is that $S$ is independent of the choice of $a \in E_R$.
This definition is a necessary and sufficient condition of the original definition.
See \cite[Proposition 3.1]{SakoONLP} for the proof.
We can replace the quantifier of $c$.

\begin{lemma}[Proposition 3.1 of \cite{SakoONLP}]
\label{Lemma quantifier}
A metric space $X$ with bounded geometry has the operator norm localization property if 
there exists $0 < c < 1$ such that for every positive number $R$,
condition $(\alpha)$ holds.
\end{lemma}

The following theorem is the main result of \cite{SakoONLP}:
\begin{theorem}[Theorem 4.1 of \cite{SakoONLP}]
For a metric space with bounded geometry, the operator norm localization property is equivalent to property A.
\end{theorem}

Let us pay attention on spaces without the operator norm localization property.
For a subset $Y$ of $X$, 
denote by $P_Y$ the orthogonal projection from $\ell_2(X)$ onto 
$\ell_2(Y)$.

\begin{lemma}\label{lemma: not ONLP}
If $(X, d)$ does {\rm not} have the operator norm localization property, then there exist
a sequence of disjoint subsets $V_n$ of $X$,
a sequence of positive matrices $b_n$ acting on $\ell_2(V_n)$ with norm $1$,
and a sequence $S_n$ of positive numbers
satisfying that
\begin{enumerate}
\item
for every $n \in \N$ and $Y \subset V_n$, if $\mathrm{diam}(Y) \le S_n$, 
then $\| P_Y b_n P_Y \| < 1/3$;
\item
$\lim_n S_n = \infty$;
\item
the propagation of $b = \sum_n b_n$ is finite.
\end{enumerate}
\end{lemma}

This lemma is essentially the same as \cite[Lemma 4.2]{RoeWillett}. The author gives its proof for the reader's convenience.

\begin{proof}
Suppose that $X$ does not have the operator norm localization property.
By Lemma \ref{Lemma quantifier}, there exists a positive number $R$ such that condition ($\alpha$) does not hold for $c = 1/3$. For such $R$, we have condition ($\beta$): for every positive number $S$,
there exists an operator $a \in E_R$ such that
$\|a P_Y\| < \| a \| /3$ whenever the diameter of $Y \subset X$ is at most $S$.

We prove the lemma by induction.
Let $S_1$ be a positive number.
Choose an operator $a$ satisfying condition ($\beta$)
for $S = S_1$.
The operator $a^* a$ 
is a member of $E_{2 R}$
and satisfies $\| P_Y a^* a P_Y \| < \| a^* a \| / 9$ for every subset $Y \subset X$
 whose diameter is at most $S_1$.
On the other hand, there exists a finite set $V(1)$ of $X$
such that 
$\| a^* a \| / 3 <  \| P_{V(1)} a^* a P_{V(1)}\|$
and that $V(1)$ is coarsely connected.
For any subset $Y \subset V(1)$ whose diameter is at most $S_1$, the inequality
\[\| P_Y a^* a P_Y \|  < \| a^* a \|/9  < \| P_{V(1)} a^* a P_{V(1)}\|/3\]
holds.
Define $b_1$ by $P_{V(1)} a^* a P_{V(1)}$.

Now we assume that there exist operators $b_1, b_2, \cdots, b_n$, finite subsets $V(1)$, $V(2)$, $\cdots$, $V(n)$ of $X$, and positive numbers $S_1, \cdots, S_n$ with the following conditions.
\begin{enumerate}[(i)]
\item\label{(1)onlpwwexpander}
every $V(j)$ is coarsely connected.
\item\label{(2)onlpwwexpander}
$V(1), V(2), \cdots, V(n)$ are disjoint.
\item\label{(3)onlpwwexpander}
$b_1, b_2, \cdots, b_n$ are positive elements of $E_{2 R}$
and satisfy $b_j = P_{V(j)} b_j P_{V(j)}$.
\item\label{(4)onlpwwexpander}
$\| P_Y b_j P_Y \| 
< \| b_j \| /3$, when
the diameter of $Y \subset V(j)$ is at most $S_j$,
\item\label{(5)onlpwwexpander}
$S_j + 1 \le S_{j + 1}$.
\end{enumerate}
Define $S_{n + 1}$ by
\[S_{n + 1} = \max_{\widetilde{X}}\ \mathrm{diam} \left( \bigcup \left\{ V(j) \ \left| \ V(j) \subset \widetilde{X}, 1 \le j \le n \right. \right\} \right) + 1,\]
where $\widetilde{X}$ is a coarse connected component of $X$.

By condition ($\beta$), there exists an operator $a \in E_R$ such that
$\|a P_Y\| < \| a \| / 3$ whenever the diameter of $Y \subset X$ 
is at most $S_{n + 1}$.
For every coarse connected component $\widetilde{X} \subset X$, the projection $P_{\widetilde {X}}$ commutes with $a$ and 
\[\mathrm{diam} \left( \bigcup \left\{ V(j) \ \left| \ 
V(j) \subset \widetilde{X}, 1 \le j \le n \right. \right\} \right) < S_{n + 1}.\]
Therefore, we have
$\| a P _{\cup_{j = 1}^n V(j)}\| < \|a\| / 3$.
There exists a finite subset $V(n + 1)$ of $X \setminus \cup_{j = 1}^n V(j)$ such that:
\begin{itemize}
\item
$\frac{\sqrt 3}{2} \left\| a P _{X \setminus \cup_{j = 1}^n V(j)} \right\|
\le \left\| a P _{V(n + 1)} \right\|$;
\item
$V(n + 1)$ is coarsely connected.
\end{itemize}
The inequality in the first item implies
\begin{eqnarray*}
\frac{1}{\sqrt 3} \|a\|
&=&
\frac{\sqrt 3}{2} \left( \|a\| - \frac{1}{3} \left\| a \right\| \right) 
<
\frac{\sqrt 3}{2} \left( \|a\| - \left\| a P _{\cup_{j = 1}^n V(j)} \right\| \right) \\
&\le& 
\frac{\sqrt 3}{2} \left\|a P _{X \setminus \cup_{j = 1}^n V(j)} \right\|
\le
\left\| a P _{V(n + 1)} \right\|.
\end{eqnarray*}
Define $b_{n + 1}$ by $P _{V(n + 1)} a^* a P _{V(n + 1)}$.
For every subset $Y \subset V(n + 1)$ whose diameter is at most $S_{n + 1}$, we have
\begin{eqnarray*}
\|P_Y b_{n + 1} P_Y\| = \|P_Y a^* a P_Y\| < \frac{\| a^* a \|}{9} 
< \frac{\left\| P _{V(n + 1)} a^* a P _{V(n + 1)} \right\|}{3} = \frac{\| b_{n + 1} \|}{3}.
\end{eqnarray*}
Now we obtain operators $b_1$, $\cdots$, $b_n$, $b_{n+1}$,
finite subsets $V(1)$, $\cdots$, $V(n)$, $V(n + 1)$,
and positive numbers
$S_1$, $\cdots$, $S_n$, $S_{n + 1}$ with the same conditions as (\ref{(1)onlpwwexpander}),
(\ref{(2)onlpwwexpander}),
(\ref{(3)onlpwwexpander}),
(\ref{(4)onlpwwexpander}),
and
(\ref{(5)onlpwwexpander}).

Iterate this procedure infinitely many times.
The sequence $(S_n)$ diverges and
the propagation of $b := \sum_{j = 1}^\infty b_j / \|b_j\|$ is at most $2R$.
\end{proof}

\subsection{Local reflexivity}

In this paper, we consider the following properties for C$^*$-algebras:
nuclearity,
exactness,
and local reflexivity.
It is not hard to see that nuclearity implies exactness. Conclusions by Kirchberg (\cite{KirchbergExactUHF} and \cite{KirchbergCAR}) show that
exactness implies local reflexivity.
They are all related to minimal tensor products 
between C$^*$-algebras. 
The following is the definition of local reflexivity.
\begin{definition}[Section 5 of Effros--Haagerup \cite{EffrosHaagerup}]
A C$^*$-algebra $B$ is said to be locally reflexive if for every finite-dimensional operator system $V \subset B^{**}$, 
there exists a net of contractive completely positive maps $\Phi_\iota \colon V \rightarrow B$ 
which converges to $\mathrm{id}_V$ in the point-weak-$*$ topology.
\end{definition}
We use the following proved in
\cite[Theorem 3.2, 5.1, and Proposition 5.3]{EffrosHaagerup}:
\begin{itemize}
\item
A C$^*$-subalgebra of a locally reflexive C$^*$-algebra is also locally reflexive.
\item
If $B$ is locally reflexive and $K$ is an ideal of $B$, 
then the exact sequence 
$0 \rightarrow K \rightarrow B \rightarrow B/K \rightarrow 0$ 
locally splits.
More precisely, for every finite-dimensional operator system 
$V \subset B / K$, 
there exists a unital completely positive map $\Phi \colon V \rightarrow B$ such that $\Phi(a) + K = a$, $a \in V$. 
\end{itemize}
If the algebra $B$ is locally reflexive and $K$ is an ideal of $B$,
then for every C$^*$-algebra $C$, the naturally defined sequence
$0 
\rightarrow K \otimes_\mathrm{min} C 
\rightarrow B \otimes_\mathrm{min} C 
\rightarrow B/K \otimes_\mathrm{min} C 
\rightarrow 0$
is exact (\cite[Theorem 3.2]{EffrosHaagerup}).

\section{Uniform Roe algebra of a sequence of finite connected graphs}
\label{section: sequence of finite graphs}

In this section, we concentrate on the case that an infinite metric space $V$ with bounded geometry is given by a sequence of finite connected graphs $(G_n$ $=$ $(V_n,$ ${\rm Edge}_n))_{n \in \N}$.
We denote by $V$ the disjoint union $\sqcup_n V_n$.
We study a sequence of vector states $\omega_n = \langle \cdot \xi_n, \xi_n \rangle$ on $\CstarU(V)$ given by unit vectors $\xi_n \in \ell_2(V_n)$
under the assumption that $\CstarU(V)$ is locally reflexive.
Our goal is Theorem \ref{theorem: approximation}.
Theorem \ref{theorem: approximation} means that the vector states can be approximated by some states which satisfy some localization property.

\subsection{Action of the $\Cstar$-minimal tensor product}

Let $(G_n = (V_n, {\rm Edge}_n))_{n \in \N}$ be a sequence of finite connected graphs.
The set $V_n$ is the vertex set of $G_n$, and ${\rm Edge}_n \subset V_n^2$ is a symmetric subset of $V_n$, which is called an edge set of $G_n$.
The graph metric $d_n$ on $V_n$ is defined. 
It is the maximal metric function on $V_n \times V_n$ satisfying that
$d_n(x, y) \le 1$ for $(x, y) \in \mathrm{Edge}_n$.

We also introduce a metric $d$ on $V = \sqcup_n V_n$ by the following:
for $x, y  \in V$,
\begin{itemize}
\item
if there exists $n \in \N$ such that $x, y \in V_n$, then $d(x, y) = d_n(x, y)$,
\item
if $x \in V_m$, $y \in V_n$, and if $m \neq n$, then $d(x, y) = \infty$.
\end{itemize}
We always assume that $V$ has bounded geometry and that $\CstarU(V)$ is locally reflexive.
We note that the algebra $\CstarU(V)$ is a subalgebra of $\prod_{n \in \N} \B(\ell_2(V_n))$, the algebra of norm bounded sequences of matrices.

The algebra $\CstarU(V)$ acts on the Hilbert space $\bigoplus_{n \in \N} \ell_2(V_n)$.
The opposite algebra $\CstarU(V)^\mathrm{op}\subset \prod_{n \in \N} \B(\ell_2(V_n))^\mathrm{op}$ also acts on $\bigoplus_{n \in \N} \ell_2(V_n)$ by its transpose:
\[(a_n)_n^\mathrm{op}(\xi_n)_n = \left( a_n^\mathrm{T} \xi_n \right)_n, 
\quad (a_n)_n^\mathrm{op} \in \CstarU(V)^\mathrm{op}, 
(\xi_n)_n \in \bigoplus_{n \in \N} \ell_2(V_n).\]
The $\Cstar$-minimal tensor product 
$\CstarU(V) 
\otimes_\mathrm{min} 
\CstarU(V)^\mathrm{op}$
acts on the Hilbert space 
$\bigoplus_{n \in \N} \ell_2(V_n) \otimes \bigoplus_{n \in \N} \ell_2(V_n)$.
The closed subspace $\bigoplus_{n \in \N} \ell_2 \left( V_n^2 \right)$ of $\bigoplus_{n \in \N} \ell_2(V_n) \otimes \bigoplus_{n \in \N} \ell_2(V_n)$ is invariant under the action of the algebra $\CstarU(V) 
\otimes_\mathrm{min} 
\CstarU(V)^\mathrm{op}$.

\subsection{A state $\phi_\infty$ of $\CstarU(V) 
\otimes_\mathrm{min}
\CstarU(V)^\mathrm{op}$}
\label{subsection: diagonal state at infinity}

Let $(\xi_n)_n$ be a sequence of unit vectors in the Hilbert space $\bigoplus_{n \in \N} \ell_2(V_n)$.
Until Theorem \ref{theorem: approximation}, we always assume the following:
\begin{itemize}
\item
$\xi_n$ is a positive element of $\ell_2(V_n)$ and its support is $V_n$,
\item
there exists a positive number $\Lambda$ such that for every $n \in \N$ and for every $(x, y) \in {\rm Edge}_n$,
$\xi_n(x)^2 \le \Lambda \xi_n(y)^2$.
\end{itemize}
In the proof of Theorem \ref{theorem: approximation}, we prove that the general case reduces to the case that the unit vectors satisfy these conditions.

Define a sequence of unit vectors $(\Xi_n)_n$ in $\bigoplus_{n \in \N} \ell_2 \left( V_n^2 \right)$ by
\[\Xi_n = \sum_{x \in V_n} \xi_n(x) \delta_{(x, x)} \in \ell_2 \left( V_n^2 \right).\]
These vectors $\Xi_n$ define states $\phi_n(\ \cdot \ ) = \langle \ \cdot \ \Xi_n, \Xi_n \rangle$ on the $\Cstar$-algebra $\CstarU(V) 
\otimes_\mathrm{min} 
\CstarU(V)^\mathrm{op}$.
Although our subject is the sequence of vector states $\omega_n = \langle \cdot \xi_n, \xi_n \rangle$,
we concentrate on 
that of vector states $\phi_n = \langle \ \cdot \ \Xi_n, \Xi_n \rangle$ for a while.
Choose and fix an accumulation point $\phi_\infty$ of the sequence $(\phi_n)_n$ in the dual space $\left(\CstarU(V) 
\otimes_\mathrm{min} 
\CstarU(V)^\mathrm{op} \right)^* $.
Denote by $\left( \pi_\infty, \Hil_\infty, \Xi_\infty \right)$ the GNS-triple of the state $\phi_\infty$ on the algebra $\CstarU(V) 
\otimes_\mathrm{min} 
\CstarU(V)^\mathrm{op}$.
For $a \in \CstarU(V)$ and $a^\mathrm{op} \in \CstarU(V)^\mathrm{op}$, we simply write
\begin{eqnarray*}
\pi_\infty(a \otimes 1) &=& \pi_\infty(a),\\
\pi_\infty(1 \otimes a^\mathrm{op}) &=& \pi_\infty(a)^\mathrm{op}.
\end{eqnarray*}
The image $\pi_\infty(\CstarU(V))$ commutes with $\pi_\infty(\CstarU(V))^\mathrm{op}$.

\begin{lemma}
\label{lemma: left and right}
For every $a \in \cup_R E_R$,
there exists $b \in \cup_R E_R$
such that $\pi_\infty(a) \Xi_\infty = \pi_\infty(b)^\mathrm{op} \Xi_\infty$.
For every $b \in \cup_R E_R$,
there exists $a \in \cup_R E_R$
such that $\pi_\infty(a) \Xi_\infty = \pi_\infty(b)^\mathrm{op} \Xi_\infty$.
\end{lemma}

\begin{proof}
We prove the first half.
Let $([a_{n, x, y}]_{x, y \in V_n})_n$ be the matrix expression of a non-zero operator $a \in \CstarU(V)$ with finite propagation. Define $R$ by $R = \sup \{d(x, y) \ |\ n \in \N, x, y \in V_n, a_{n, x, y} \neq 0\}$.
Define $b \in \CstarU(V)$ by
\[b = \left( \left[ a_{n, x, y} \frac{\xi_n(y)}{\xi_n(x)} \right]_{x, y \in V_n} \right)_n\]
The vector $(a \otimes 1) \Xi_n$ is equal to the following vector:
\begin{eqnarray*}
\sum_{z \in V_n} \left( [a_{n, x, y}]_{x, y \in V_n} \xi_n(z) \delta_z \right) \otimes \delta_z 
&=& 
\sum_{z \in V_n} \left( \sum_{w \in V_n} a_{n, w, z} \xi_n(z) \delta_w \right) \otimes \delta_z\\
&=& 
\sum_{w \in V_n} \delta_w \otimes 
\left(\sum_{z \in V_n} a_{n, w, z} \frac{\xi_n(z)}{\xi_n(w)} \xi_n(w) \delta_z \right)\\
&=& 
\sum_{w \in V_n} \delta_w \otimes 
\left( \left[a_{n, x, y} \frac{\xi_n(y)}{\xi_n(x)} \right]_{x, y \in V_n}^\mathrm{T} \xi_n(w) \delta_w \right)\\
&=&
(1 \otimes b^\op) \Xi_n.
\end{eqnarray*}
It follows that $\phi_n( |(a \otimes 1) - (1 \otimes b^\op)|^2) = 0$.
Therefore we have $\phi_\infty( |(a \otimes 1) - (1 \otimes b^\op)|^2) = 0$ and 
$\pi_\infty(a \otimes 1) \Xi_\infty - \pi_\infty(1 \otimes b^\op) \Xi_\infty = 0$.
\end{proof}

\begin{lemma}
\label{lemma: dense subspaces}
The vector $\Xi_\infty$ is a cyclic vector of $\pi_\infty(\CstarU(V))$.
The vector $\Xi_\infty$ is a cyclic vector of $\pi_\infty(\CstarU(V))^\op$.
\end{lemma}

\begin{proof}
We prove the second half.
The subspace 
$\mathrm{span}\{\pi_\infty(b)^\op \pi_\infty(a) \Xi_\infty \ | \ a, b \in \cup_R E_R\}$
of $\Hil_\infty$ is dense. 
By Lemma \ref{lemma: left and right}, for every $a \in  \cup_R E_R$,
there exists $b_1 \in \cup_R E_R$ satisfying that $\pi_\infty(a) \Xi_\infty = \pi_\infty(b_1)^\op \Xi_\infty$.
Therefore, the subspace
$\{\pi_\infty(b)^\op \Xi_\infty \ | \ b \in \cup_R E_R\}$ is equal to the above dense subspace.
\end{proof}

\subsection{A representation of $\ell_\infty \left( \sqcup_{n} V_n^2 \right)$ on $\Hil_\infty$}

Let $\zeta$ be a bounded function on $\sqcup_{n} V_n^2$.
For an arbitrary bounded operator $a = ([a_{n, x, y}]_{x, y \in V_n})_n$ on $\ell_2(V)$ with finite propagation, 
since $|\zeta(x, y) a_{n, x, y}| \le \|\zeta\|_\infty \|a\|$,
the operator $([\zeta(x, y) a_{n, x, y}]_{x, y \in V_n})_n$ is also a bounded operator with finite propagation, by Lemma \ref{lemma: operator given by bounded controlled function}. We denote by $M[\zeta] (a)$ the operator.

\begin{lemma}
\label{lemma: varpi_infty}
There exists a unique bounded operator $\varpi_\infty(\zeta)$ on $\Hil_\infty$ satisfying that for every $a \in \cup_R E_R$, $\varpi_\infty(\zeta) \pi_\infty(a) \Xi_\infty = \pi_\infty(M[\zeta](a)) \Xi_\infty$.
\end{lemma}

\begin{proof}
For every $a = ([a_{n, x, y}]_{x, y \in V_n})_n \in \cup_R E_R$,
there exists an increasing sequence $(n(k))$ of natural numbers such that
\begin{eqnarray*}
\phi_\infty( a^* a )
&=&
\lim_{k \to \infty} \phi_{n(k)}( a^* a ),
\\
\phi_\infty( M[\zeta](a)^* M[\zeta](a))
&=&
\lim_{k \to \infty} \phi_{n(k)}( M[\zeta](a)^* M[\zeta](a)).
\end{eqnarray*}
For every $k$, we have
\begin{eqnarray*}
\phi_{n(k)}( a^* a )
&=& \sum_{y \in V_n} \xi_{n(k)}(y)^2
\sum_{x \in V_n} |a_{n, x, y}|^2,\\
\phi_{n(k)}( M[\zeta](a)^* M[\zeta](a))
&=& \sum_{y \in V_n} \xi_{n(k)}(y)^2 
\sum_{x \in V_n} |\zeta(x, y)|^2 |a_{n, x, y}|^2.
\end{eqnarray*}
It follows that $\phi_{n(k)}( M[\zeta](a)^* M[\zeta](a))
\le \|\zeta\|_\infty^2 \phi_{n(k)}( a^* a )$.
Thus we obtain the inequalities 
$\phi_\infty( M[\zeta](a)^* M[\zeta](a))
\le \|\zeta\|_\infty^2 \phi_\infty( a^* a )$ and
\begin{eqnarray*}
\|\pi_\infty(M[\zeta](a)) \Xi_\infty\|^2
\le 
\| \zeta \|_\infty^2
\|\pi_\infty(a) \Xi_\infty\|^2.
\end{eqnarray*}
We conclude that the mapping $\pi_\infty(a) \Xi_\infty \mapsto \pi_\infty(M[\zeta](a)) \Xi_\infty$ is well defined and bounded.
By Lemma \ref{lemma: dense subspaces},
the mapping uniquely extends to a bounded operator on the Hilbert space $\Hil_\infty$.
\end{proof}
It is also easy to show that
$\varpi_\infty \colon \ell_\infty \left( \sqcup_{n} V_n^2 \right) \to \B(\Hil_\infty)$ is a $*$-homomorphism.

Note that
there exists a natural unital embedding $\iota$ of $\ell_\infty(V)$ into $\ell_\infty \left( \sqcup_{n} V_n^2 \right)$ defined by
\[ [\iota(\zeta)](x, y) = \zeta(x), \quad \zeta \in \ell_\infty(V), (x, y) \in \sqcup_{n} V_n^2.\]
It straightforward to show that the representation $\pi_\infty |_{\ell_\infty(V)}$ of the diagonal subalgebra $\ell_\infty(V)$ of $\CstarU(V)$ and the representation $\varpi_\infty \circ \iota$ are identical.

\subsection{Modular operator with respect to $(\pi_\infty(\CstarU(V))'', \Xi_\infty)$}

Recall that $E_R$ is the space of operators on $\ell_2(V)$ whose propagations are at most $R$.
Denote by $\Hil_R$ the closure of $\pi_\infty(E_R) \Xi_\infty \subset \Hil_\infty$.
Note that $\cup_R \Hil_R$ is dense in $\Hil_\infty$, by Lemma \ref{lemma: dense subspaces}.
Define a function $H \colon \sqcup_n V_n^2 \to \R$ by
\[H(x, y) = \xi_n(x)^2 / \xi_n(y)^2, \quad n \in \N, x, y \in V_n.\]
Note that the function $H$ is bounded on every controlled subset of $\sqcup_n V_n^2$.
Therefore, the operator $M[H]$ defines a bounded self-adjoint operator on $\Hil_R$ for every $R$.
Such an operator on $\cup_R \Hil_R$ uniquely extends to a self-adjoint operator on $\Hil_\infty$, which is not necessarily bounded.
Denote by $\Delta$ the extension.

We also define an anti-linear isometry $J$ on $\Hil_\infty$ by the following.

\begin{lemma}
There exists a unique anti-linear isometry $J$ such that for every
$a = ([a_{n, x, y}]_{x, y \in V_n})_n \in \cup_R E_R$,
\[J \pi_\infty(([a_{n, x, y}]_{x, y \in V_n})_n) \Xi_\infty = 
\pi_\infty \left( \left( \left[ H(x, y)^{1/2}\ \overline{a_{n, y, x}} \right]_{x, y \in V_n} \right)_n \right) \Xi_\infty.\]
\end{lemma}

\begin{proof}
Because the sequence of matrices $\left( \left[ H(x, y)^{1/2} \overline{a_{n, y, x}} \right]_{x, y \in V_n} \right)_n$ has finite propagation and the set of entries is bounded,
it gives a bounded operator by Lemma \ref{lemma: operator given by bounded controlled function}.
It is straightforward to show that for every $n$,
\[
\left\|
[a_{n, x, y}]_{x, y \in V_n} \Xi_n
\right\|^2
=
\left\|
\left[ H(x, y)^{1/2}\ \overline{a_{n, y, x}} \right]_{x, y \in V_n} \Xi_n
\right\|^2,
\]
\begin{eqnarray*}
&& \phi_n( ([a_{n, x, y}]_{x, y \in V_n})_n^* ([a_{n, x, y}]_{x, y \in V_n})_n )\\
&=&
\phi_n \left( \left( \left[ H(x, y)^{1/2}\ \overline{a_{n, y, x}} \right]_{x, y \in V_n} \right)_n^*
\left( \left[ H(x, y)^{1/2}\ \overline{a_{n, y, x}} \right]_{x, y \in V_n} \right)_n \right).
\end{eqnarray*}
The latter equality implies
\begin{eqnarray*}
&& \phi_\infty( ([a_{n, x, y}]_{x, y \in V_n})_n^* ([a_{n, x, y}]_{x, y \in V_n})_n )\\
&=&
\phi_\infty \left( \left( \left[ H(x, y)^{1/2}\ \overline{a_{n, y, x}} \right]_{x, y \in V_n} \right)_n^*
\left( \left[ H(x, y)^{1/2}\ \overline{a_{n, y, x}} \right]_{x, y \in V_n} \right)_n \right),
\end{eqnarray*}
\begin{eqnarray*}
\left\|
\pi_\infty(([a_{n, x, y}]_{x, y \in V_n})_n) \Xi_\infty
\right\|^2
&=&
\left\|
\pi_\infty \left( \left( \left[ H(x, y)^{1/2}\ \overline{a_{n, y, x}} \right]_{x, y \in V_n} \right)_n \right) \Xi_\infty 
\right\|^2.
\end{eqnarray*}
Because $\pi_\infty(\cup_R E_R) \Xi_\infty$ is dense in $\Hil_\infty$ (Lemma \ref{lemma: dense subspaces}),
there exists a unique isometry $J$ on $\Hil_\infty$ which extends
the mapping
\begin{eqnarray*}
\pi_\infty(([a_{n, x, y}]_{x, y \in V_n})_n) \Xi_\infty
&\mapsto&
\pi_\infty \left( \left( \left[ H(x, y)^{1/2}\ \overline{a_{n, y, x}} \right]_{x, y \in V_n} \right)_n \right) \Xi_\infty.
\end{eqnarray*}
It is also clear that $J$ is anti-linear.
\end{proof}

\begin{lemma}
\label{lemma: relation between J and Delta}
$J^2 = 1$, $J \Delta^{1/2} = \Delta^{-1/2} J$.
\end{lemma}

\begin{proof}
For every $([a_{n, x, y}]_{x, y \in V_n})_n \in \cup_R E_R$, we obtain
\begin{eqnarray*}
&&
J J \pi_\infty(([a_{n, x, y}]_{x, y \in V_n})_n) \Xi_\infty\\
&=&
J \pi_\infty \left( \left( \left[ H(x, y)^{1/2}\ \overline{a_{n, y, x}} \right]_{x, y \in V_n} \right)_n \right) \Xi_\infty\\
&=&
\pi_\infty \left( \left( \left[ H(x, y)^{1/2}\ \overline{H(y, x)^{1/2}\ \overline{a_{n, x, y}}} \right]_{x, y \in V_n} \right)_n \right) \Xi_\infty\\
&=&
\pi_\infty(([a_{n, x, y}]_{x, y \in V_n})_n) \Xi_\infty.
\end{eqnarray*}
Therefore we have $JJ = 1$. We also obtain
\begin{eqnarray*}
&&
J \Delta^{1/2}
\pi_\infty(([a_{n, x, y}]_{x, y \in V_n})_n) \Xi_\infty\\
&=&
J \pi_\infty \left( \left( \left[ H(x, y)^{1/2}\ a_{n, x, y} \right]_{x, y \in V_n} \right)_n \right) \Xi_\infty \\
&=&
\pi_\infty \left( \left( \left[ H(x, y)^{1/2}\ \cdot \overline{ H(y, x)^{1/2}\ a_{n, y, x}} \right]_{x, y \in V_n} \right)_n \right) \Xi_\infty \\
&=&
\pi_\infty \left( \left( \left[ \overline{a_{n, y, x}} \right]_{x, y \in V_n} \right)_n \right) \Xi_\infty.
\end{eqnarray*}
and
\begin{eqnarray*}
\Delta^{- 1/2} J 
\pi_\infty(([a_{n, x, y}]_{x, y \in V_n})_n) \Xi_\infty
&=&
\Delta^{- 1/2} \pi_\infty \left( \left( \left[ H(x, y)^{1/2}\ \overline{a_{n, y, x}} \right]_{x, y \in V_n} \right)_n \right) \Xi_\infty \\
&=&
\pi_\infty \left( \left( \left[ \overline{a_{n, y, x}} \right]_{x, y \in V_n} \right)_n \right) \Xi_\infty.
\end{eqnarray*}
Therefore we have $J \Delta^{1/2} = \Delta^{-1/2} J$ on every $\Hil_R$.
It implies the lemma.
\end{proof}

\begin{lemma}
\label{lemma: Delta and the opposite representation}
For every $b \in \cup_R E_R$, 
$\pi_\infty(b)^\op \Xi_\infty = \Delta^{1/2} \pi_\infty(b) \Xi_\infty$.
\end{lemma}

\begin{proof}
This has been already shown in the proof of Lemma \ref{lemma: left and right}.
\end{proof}

\begin{lemma}
\label{lemma: J, Delta, and adjoint}
For every $a, b \in \cup_R E_R$, 
the following equations hold:
\[\pi_\infty(a^*) \Xi_\infty = J \Delta^{1/2} \pi_\infty(a) \Xi_\infty, 
\quad
\pi_\infty(b^*)^\op \Xi_\infty 
= J \Delta^{-1/2} \pi_\infty(b)^\op \Xi_\infty.\]
\end{lemma}

\begin{proof}
For $a = ([a_{n, x, y}]_{x, y \in V_n})_n$, we have already obtained the equation
\begin{eqnarray*}
J \Delta^{1/2}
\pi_\infty(([a_{n, x, y}]_{x, y \in V_n})_n) \Xi_\infty
=
\pi_\infty \left( \left( \left[ \overline{a_{n, y, x}} \right]_{x, y \in V_n} \right)_n \right) \Xi_\infty,
\end{eqnarray*}
in the proof of Lemma \ref{lemma: relation between J and Delta}.
This is the first claim.
By Lemma \ref{lemma: Delta and the opposite representation}, 
and by the first claim,
we obtain
\begin{eqnarray*}
\pi_\infty(b^*)^\op \Xi_\infty 
= \Delta^{1/2} \pi_\infty(b^*) \Xi_\infty
= \Delta^{1/2} J \Delta^{1/2} \pi_\infty(b) \Xi_\infty.
\end{eqnarray*}
By Lemma \ref{lemma: Delta and the opposite representation} and 
\ref{lemma: relation between J and Delta},
we obtain
\begin{eqnarray*}
\Delta^{1/2} J \Delta^{1/2} \pi_\infty(b) \Xi_\infty
= \Delta^{1/2} J \pi_\infty(b)^\op \Xi_\infty
= J \Delta^{- 1/2} \pi_\infty(b)^\op \Xi_\infty.
\end{eqnarray*}
Thus we obtain the second claim.
\end{proof}

\begin{lemma}
\label{lemma: J and the opposite representation}
For every $a \in \cup_R E_R$, 
$J \pi_\infty(a) J = \pi_\infty(a^*)^\op$.
\end{lemma}

\begin{proof}
For every $a, b \in \cup_R E_R$, 
by Lemma \ref{lemma: Delta and the opposite representation},
we have
\begin{eqnarray*}
J \pi_\infty(a) J \pi_\infty(b)^\op \Xi_\infty
=
J \pi_\infty(a) J \Delta^{1/2} \pi_\infty(b) \Xi_\infty.
\end{eqnarray*}
By Lemma
\ref{lemma: J, Delta, and adjoint}, we have
\begin{eqnarray*}
J \pi_\infty(a) J \Delta^{1/2} \pi_\infty(b) \Xi_\infty
=
J \pi_\infty(a) \pi_\infty(b^*) \Xi_\infty
=
J \pi_\infty(a b^*) \Xi_\infty.
\end{eqnarray*}
Again by
Lemma \ref{lemma: Delta and the opposite representation},
we have
\begin{eqnarray*}
J \pi_\infty(a b^*) \Xi_\infty
=
J \Delta^{-1/2} \pi_\infty(a b^*)^\op \Xi_\infty.
\end{eqnarray*}
By Lemma
\ref{lemma: J, Delta, and adjoint}, we have
\begin{eqnarray*}
J \Delta^{-1/2} \pi_\infty(a b^*)^\op \Xi_\infty
=
\pi_\infty(b a^*)^\op \Xi_\infty
=
\pi_\infty(a^*)^\op \pi_\infty(b)^\op \Xi_\infty.
\end{eqnarray*}
Since $\pi_\infty(\cup_R E_R)^\op \Xi_\infty$ is dense in $\Hil_\infty$ (Lemma \ref{lemma: dense subspaces}),
we obtain
$J \pi_\infty(a) J = \pi_\infty(a^*)^\op$.
\end{proof}

Denote by $S$ the closure of the mapping
\begin{eqnarray*}
\pi_\infty(\CstarU(V))'' \Xi_\infty &\to& \pi_\infty(\CstarU(V))'' \Xi_\infty\\
a \Xi_\infty &\mapsto& a^* \Xi_\infty.
\end{eqnarray*}
Denote by $F$ the closure of the mapping
\begin{eqnarray*}
\pi_\infty(\CstarU(V))' \Xi_\infty &\to& \pi_\infty(\CstarU(V))' \Xi_\infty\\
b \Xi_\infty &\mapsto& b^* \Xi_\infty.
\end{eqnarray*}
It is straightforward to show that $F \subset S^*$ and $S \subset F^*$.

\begin{lemma}
\label{lemma: polar decompositions}
$S = J \Delta^{1/2}, F = J \Delta^{-1/2}$.
\end{lemma}

\begin{proof}
By Lemma \ref{lemma: J, Delta, and adjoint},
the operators $S$ and $J \Delta^{1/2}$ are identical on the subspace $\cup_R \pi_\infty(E_R) \Xi_\infty$.
The subspace $\cup_R \pi_\infty(E_R) \Xi_\infty$ is a core of $\Delta^{1/2}$.
It follows that $J \Delta^{1/2} \subset S$.
Recall that $\pi_\infty(\cup_R E_R)^{\mathrm{op}} \subset \pi_\infty(\CstarU(V))^{\prime}$.
By Lemma \ref{lemma: J, Delta, and adjoint},
the operators $F$ and $J \Delta^{- 1/2}$ are identical on the subspace $\cup_R \pi_\infty(E_R)^\op \Xi_\infty$.
Since $\pi_\infty(\cup_R E_R)^\op \Xi_\infty = \pi_\infty(\cup_R E_R) \Xi_\infty$ (Lemma \ref{lemma: left and right}),
$\pi_\infty(\cup_R E_R)^\op \Xi_\infty$ is a core of $\Delta^{- 1/2}$.
It follows that $J \Delta^{- 1/2} \subset F$.
Taking the adjoints of $J \Delta^{1/2} \subset S$, we obtain $S^* \subset (J \Delta^{1/2})^*$.
Combining with Lemma \ref{lemma: relation between J and Delta}, we have
\[J \Delta^{- 1/2} \subset F \subset S^* \subset (J \Delta^{1/2})^* = \Delta^{1/2} J = J \Delta^{-1/2}.\]
Thus we obtain $F = J \Delta^{-1/2}$.

Taking the adjoints of $J \Delta^{- 1/2} \subset F$, we obtain $F^* \subset (J \Delta^{- 1/2})^*$.
Combining with Lemma \ref{lemma: relation between J and Delta}, we have
\[J \Delta^{1/2} \subset S \subset F^* \subset (J \Delta^{- 1/2})^* = \Delta^{- 1/2} J = J \Delta^{1/2}.\]
Thus we obtain $S = J \Delta^{1/2}$.
\end{proof}

\begin{proposition}\label{proposition: commutant}
${\pi_\infty(\CstarU(V))^\op}'' = \pi_\infty(\CstarU(V))'$.
\end{proposition}

\begin{proof}
By Lemma \ref{lemma: polar decompositions},
$J$ is the modular conjugation of $\pi_\infty(\CstarU(V))''$ with respect to the state given by $\Xi_\infty$.
By the fundamental theorem by Tomita \cite[Theorem 4.1]{TomitaTakesaki} for the left Hilbert algebra $\pi(\CstarU(V))'' \Xi_\infty$,
the commutant $\pi_\infty(\CstarU(V))'$ is identical to $J \pi_\infty(\CstarU(V))'' J$.
It follows that $\pi_\infty(\CstarU(V))'$ is generated by $J \pi_\infty(\cup_R E_R) J$.
By Lemma \ref{lemma: J and the opposite representation},
$J \pi_\infty(\cup_R E_R) J = \pi_\infty(\cup_R E_R)^\op$.
We conclude that $\pi_\infty(\CstarU(V))^\op$ generates the commutant $\pi_\infty(\CstarU(V))'$.
\end{proof}

\subsection{Existence of an equivariant state $\psi$ on $\B(\Hil_\infty)$}
The strategy used in the first half of this subsection is called ``The Trick'' in the book \cite[Section 3.6]{OzawaBook}.
Denote by $K$ the kernel of the representation $\pi_\infty \colon \CstarU(V) \to \B(\Hil_\infty)$.
Suppose that $\CstarU(V)$ is locally reflexive.
Then the sequence
\[0 
\to K  \otimes_\mathrm{min} \CstarU(V)^\mathrm{op} 
\to \CstarU(V)  \otimes_\mathrm{min} \CstarU(V)^\mathrm{op} 
\to 
\pi_\infty(\CstarU(V))  \otimes_\mathrm{min} \CstarU(V)^\mathrm{op} 
\to 0\]
is exact.
Since 
the representation 
\[ \pi_\infty \colon \CstarU(V) 
\otimes_\mathrm{min} 
\CstarU(V)^\mathrm{op} \to \B(\Hil_\infty)\]
is zero on $K \otimes_\mathrm{min} 
\CstarU(V)^\mathrm{op}$, it
naturally induces a representation
\[\pi_\infty(\CstarU(V))  \otimes_\mathrm{min} \CstarU(V)^\mathrm{op} 
\to \B(\Hil_\infty).\]

By the extension theorem by Arveson,
the representation
\[\pi_\infty(\CstarU(V)) \otimes_\mathrm{min} \CstarU(V)^\mathrm{op}
\to \B(\Hil_\infty)\]
can be extended to a unital completely positive map
\[\Phi \colon \B(\Hil_\infty) \otimes_\mathrm{min} \CstarU(V)^\mathrm{op}
\to \B(\Hil_\infty).\]
Since the subalgebra $\C \otimes_\mathrm{min} \CstarU(V)^\mathrm{op}$ is in the multiplicative domain of $\Phi$, 
$\Phi$ is a $( \C \otimes_\mathrm{min} \CstarU(V)^\mathrm{op} )$-module map.
It follows that the image $\Phi(\B(\Hil_\infty))$ of the first factor is included in the von Neumann algebra $\pi_\infty(\CstarU(V))^{\op \prime}$. By Proposition \ref{proposition: commutant}, the von Neumann algebra is identical to $\pi_\infty(\CstarU(V))^{\prime \prime}$.
Recycling the notation, we also denote by $\Phi$ the restriction
\[\Phi \colon \B(\Hil_\infty) \to \pi_\infty(\CstarU(V))''.\]
Note that $\Phi$ is the identity map on $\pi_\infty(\CstarU(V))$ and therefore $\Phi$ is a $\pi_\infty(\CstarU(V))$-module map.
Define a state $\psi$ on $\B(\Hil_\infty)$ by $\langle \Phi( \cdot ) \Xi_\infty, \Xi_\infty \rangle$.

We will show that
the state $\psi \circ \varpi_\infty$ on $\ell_\infty \left( \sqcup_{n} V_n^2 \right)$
is something like an invariant mean on an amenable group in Lemma \ref{lemma: equivariant state}.
To state the claim, we need to define several notations.
Take an arbitrary sequence $(\sigma_n)_n$ of partially defined injections
\[\sigma_n \colon \mathrm{dom} (\sigma_n) \to \mathrm{image} (\sigma_n),\]
from a subset of $V_n$ to a subset of $V_n$.
By taking the union, we obtain a partially defined injection
\[\sigma \colon \sqcup_n \mathrm{dom}(\sigma_n) \to \sqcup_n \mathrm{image}(\sigma_n),\]
defined on the subset $\sqcup_n \mathrm{dom}(\sigma_n)$ of $V = \sqcup_n V_n$.
We always consider the case that there exists a positive constant $R$ independent of $n$ such that 
for arbitrary $x, y \in V_n$, if $\sigma_n(y) = x$, then $d_n(x, y) \le R$.
We call $\sigma$ a {\it controlled} partially defined injection on $V$.
The map $\sigma$ induces a partial isometry $v_\sigma$ acting on $\ell_2(V)$ defined by $v_\sigma(\delta_x) = \delta_{\sigma(x)}, x \in \mathrm{dom}(\sigma)$.
Since $\sigma$ is controlled, 
$v_\sigma$ has finite propagation, and therefore is an element of $\CstarU(V)$.
Denote by $\sigma \times \mathrm{id}$ the partially defined injection
\begin{eqnarray*}
\sigma \times \mathrm{id} 
\colon 
\sqcup_n (\mathrm{dom}(\sigma_n) \times V_n)
&\to&
\sqcup_n (\mathrm{image}(\sigma_n) \times V_n),\\
(x, y)
&\mapsto&
(\sigma(x), y),
\end{eqnarray*}
on $\sqcup_n V_n^2$.
Note that the action of $v_\sigma$ on the Hilbert space $\ell_2 (\sqcup_n V_n^2)$ is given by the partially defined injection $\sigma \times \mathrm{id}$.
The partially defined injection $\sigma \times \mathrm{id}$ induces the $*$-isomorphisms
\begin{eqnarray*}
(\sigma \times \mathrm{id})_* 
&\colon& 
\ell_\infty( \sqcup_n (\mathrm{dom}(\sigma_n) \times V_n))
\to
\ell_\infty( \sqcup_n (\mathrm{image}(\sigma_n) \times V_n)),\\
(\sigma \times \mathrm{id})^* 
&\colon& 
\ell_\infty( \sqcup_n (\mathrm{image}(\sigma_n) \times V_n))
\to
\ell_\infty( \sqcup_n (\mathrm{dom}(\sigma_n) \times V_n)).
\end{eqnarray*}

\begin{lemma}\label{lemma: normalizer}
For $\zeta \in \ell_\infty( \sqcup_n (\mathrm{dom}(\sigma_n) \times V_n))$, we have
\[\varpi_\infty((\sigma \times \mathrm{id})_* (\zeta))
=
\pi_\infty(v_\sigma) \varpi_\infty(\zeta) \pi_\infty(v_\sigma^*). 
\]
\end{lemma}

\begin{proof}
For every $a = ([a_{n, x, y}]_{x, y \in V_n})_n \in \cup_R E_R$, 
the matrix coefficients of 
the operator $v_\sigma^* ([a_{n, x, y}]_{x, y \in V_n})_n$ at $(x, y) \in V_n^2$ is given by
\begin{eqnarray*}
\begin{cases}
a_{n, \sigma(x), y}, \quad x \in \mathrm{dom}(\sigma_n),\\
0, \quad x \notin \mathrm{dom}(\sigma_n).
\end{cases}
\end{eqnarray*}
For simplicity of the notation, in the case that $x \notin \mathrm{dom}(\sigma_n)$,
we define $a_{n, \sigma(x), y}$ by $0$.
For $\zeta \in \ell_\infty( \sqcup_n (\mathrm{dom}(\sigma_n) \times V_n))$, 
we have
\begin{eqnarray*}
&&
\pi_\infty(v_\sigma) \varpi_\infty(\zeta) \pi_\infty(v_\sigma^*) \pi_\infty(a) \Xi_\infty \\
&=& \pi_\infty(v_\sigma) \varpi_\infty(\zeta) \pi_\infty(([a_{n, \sigma(x), y}]_{x, y \in V_n})_n) \Xi_\infty \\
&=& \pi_\infty(v_\sigma) \pi_\infty(([\zeta(x, y) a_{n, \sigma(x), y}]_{x, y \in V_n})_n) \Xi_\infty \\
&=& \pi_\infty(([\zeta(\sigma^{-1}(x), y) a_{n, x, y}]_{x, y \in V_n})_n) \Xi_\infty \\
&=& \varpi_\infty((\sigma \times \mathrm{id})_* (\zeta)) \pi_\infty(a) \Xi_\infty
\end{eqnarray*}
\end{proof}

Define a positive function $h_\sigma \in \ell_\infty(V)$ on $\sqcup_n \mathrm{dom}(\sigma_n) \subset V$ by
\[h_\sigma(x) = \dfrac{ \xi_n(\sigma(x)) ^2 }{ \xi_n(x) ^2} = H(\sigma(x), x), \quad x \in \mathrm{dom}(\sigma_n).\]
By the assumption in Subsection
\ref{subsection: diagonal state at infinity},
the function $h_\sigma$ is bounded.

\begin{lemma}\label{lemma: equivariant state}
The state $\psi$ on $\B(\Hil_\infty)$ satisfies the following.
\begin{enumerate}
\item
For every $a \in \CstarU (V)$, $\psi( \pi_\infty (a) ) = \phi_\infty (a)$.
\item
The state $\psi$ is {\rm equivariant} with respect to the Radon--Nikodym derivative $H$. More precisely,
for every controlled partially defined injection $\sigma$ on $V$,
for every $c \in \B(\Hil_\infty)$, we have
\[\psi(\pi_\infty(v_\sigma) c \pi_\infty(v_\sigma^*)) = \psi( \pi_\infty(h_\sigma) c).\]
\end{enumerate}
\end{lemma}
It turns out that the state $\psi = \left\langle \Phi(\cdot) \Xi_\infty, \Xi_\infty \right\rangle$ is something similar to the hypertrace in \cite[Section V]{ConnesInjectiveFactors}.

\begin{proof}
Since $\Phi$ is the identity map on $\pi_\infty(\CstarU(V))$,
for every $a \in \CstarU (V)$, we have
\[\psi( \pi_\infty (a) ) = \langle \Phi(\pi_\infty (a)) \Xi_\infty, \Xi_\infty \rangle
= \langle \pi_\infty (a) \Xi_\infty, \Xi_\infty \rangle = \phi_\infty(a).\]
Because $\Phi$ is a $\pi_\infty(\CstarU(V))$-module map, for $c \in \B(\Hil_\infty)$, we have
\begin{eqnarray*}
\psi( \pi_\infty(v_\sigma) c \pi_\infty(v_\sigma^*) )
&=&
\langle \Phi( \pi_\infty(v_\sigma) c \pi_\infty(v_\sigma^*) ) \Xi_\infty,  
\Xi_\infty \rangle \\
&=&
\langle \Phi( c ) \pi_\infty(v_\sigma^*) \Xi_\infty,  
\pi_\infty(v_\sigma^*) \Xi_\infty \rangle.
\end{eqnarray*}
By Lemma \ref{lemma: J, Delta, and adjoint},
we have
\begin{eqnarray*}
\psi( \pi_\infty(v_\sigma) c \pi_\infty(v_\sigma^*) )
&=&
 \langle \Phi(c) J \Delta^{1/2} \pi_\infty(v_\sigma) \Xi_\infty,  
J \Delta^{1/2} \pi_\infty(v_\sigma) \Xi_\infty \rangle\\
&=&
 \langle \Phi(c) J \pi_\infty(M[H^{1/2}](v_\sigma)) \Xi_\infty,  
J \pi_\infty(M[H^{1/2}](v_\sigma)) \Xi_\infty \rangle.
\end{eqnarray*}
Since $J$ is antilinear involutive isometry, we have
\begin{eqnarray*}
\psi( \pi_\infty(v_\sigma) c \pi_\infty(v_\sigma^*) )
&=&
 \langle \pi_\infty(M[H^{1/2}](v_\sigma)) \Xi_\infty,  
J \Phi(c) J \pi_\infty(M[H^{1/2}](v_\sigma)) \Xi_\infty \rangle.
\end{eqnarray*}
By Propostion \ref{proposition: commutant},
$J \Phi(c) J \in J \pi_\infty(\CstarU(V))'' J$ commutes with
$\pi_\infty(M[H^{1/2}](v_\sigma))$.
It follows that
\begin{eqnarray*}
\psi( \pi_\infty(v_\sigma) c \pi_\infty(v_\sigma^*) )
&=&
 \langle  \pi_\infty(M[H^{1/2}](v_\sigma)^* \cdot M[H^{1/2}](v_\sigma)) \Xi_\infty,  
J \Phi(c) J\Xi_\infty \rangle.
\end{eqnarray*}
The multiplication operator $h_\sigma$ acting on $\ell_2(V)$ is identical to $M[H^{1/2}](v_\sigma)^* \cdot M[H^{1/2}](v_\sigma)$.
Combining with $J \Xi_\infty = \Xi_\infty$, we have
\begin{eqnarray*}
\psi( \pi_\infty(v_\sigma) c \pi_\infty(v_\sigma^*) )
=
 \langle  \pi_\infty(h_\sigma) \Xi_\infty,  
J \Phi(c) \Xi_\infty \rangle
=
 \langle  \Phi(c) \Xi_\infty,  
J \pi_\infty (h_\sigma) \Xi_\infty \rangle.
\end{eqnarray*}
Because the operator $h_\sigma$ acting on $\ell_2(V)$ is diagonal,
it satisfies \[\pi_\infty (h_\sigma) \Xi_\infty = \Delta^{1/2} \pi_\infty (h_\sigma) \Xi_\infty.\]
It follows that
\begin{eqnarray*}
\psi( \pi_\infty(v_\sigma) c \pi_\infty(v_\sigma^*) )
&=&
 \langle  \Phi(c) \Xi_\infty,  
J \Delta^{1/2} \pi_\infty (h_\sigma) \Xi_\infty \rangle\\
&=&
 \langle  \Phi(c) \Xi_\infty,  
\pi_\infty (h_\sigma^*) \Xi_\infty \rangle\\
&=&
 \langle \pi_\infty (h_\sigma) \Phi(c) \Xi_\infty,  
 \Xi_\infty \rangle\\
&=&
 \langle \Phi(\pi_\infty (h_\sigma) c) \Xi_\infty,  
 \Xi_\infty \rangle.
\end{eqnarray*}
Thus we conclude that $\psi( \pi_\infty(v_\sigma) c \pi_\infty(v_\sigma^*) )
 = \psi(\pi_\infty (h_\sigma) c)$. 
\end{proof}

\subsection{Almost equivariant $\ell_1$-functions on $\sqcup_n V_n^2$}

We prove in Lemma \ref{lemma: approximation by ell_1} that for a finite family $\Sigma$ of controlled partially defined injections on $\sqcup_n V_n^2$,
there exist a subsequence $(V_{n(k)})_k$ and a controlled sequence of probability measures $(p_{n(k)})_k$ on $V_{n(k)}^2$ which are almost equivariant under the action of $\Sigma$.
We make use of the technique developed in \cite[Lemma 2.4]{ConnesInjectiveFactors}.

Denote by $e$ the rank one projection onto $\C \Xi_\infty \subset \Hil_\infty$.

\begin{lemma}
\label{lemma: good density operators}
There exists a net $(s_\nu)$ of density operators on $\Hil_\infty$ satisfying the following conditions.
\begin{enumerate}
\item
Every
$s_\nu$ is a finite sum of rank one operators of the form $\pi_\infty(a) e \pi_\infty(a)^*$, where $a$ is an operator on $\ell_2(V)$ with finite propagation.
\item
For every $c \in \B(\Hil_\infty)$, $\lim_\nu \mathrm{Tr}(s_\nu c) = \psi(c)$.
\item
For every controlled partially defined injection $\sigma$ on $V$,
\[\lim_\nu \mathrm{Tr}( | s_\nu \pi_\infty(h_\sigma) - \pi_\infty(v_\sigma^*) s_\nu \pi_\infty(v_\sigma) | ) = 0.\]
\end{enumerate}
\end{lemma}

\begin{proof}
In the weak-$*$ topology, 
the state $\psi$ on $\B(\Hil_\infty)$ can be approximated by some net $(t_\nu)_{\nu \in I}$
of density operators on $\Hil_\infty$.
Because $\Xi_\infty$ is a cyclic vector of the algebra $\pi_\infty(\cup_R E_R)$, every density operator on $\Hil_\infty$ can be approximated by finite sum of rank one operator of the form $\pi_\infty(a) e \pi_\infty(a)^*$, where $a \in \cup_R E_R$.
We may assume that $t_\nu$ is such a finite sum.
Let $\Sigma$ be a finite family of controlled partially defined injections on $V$.
For every $\sigma \in \Sigma$, the trace class operators 
\[t_\nu \pi_\infty(h_\sigma) - \pi_\infty(v_\sigma^*) t_\nu \pi_\infty(v_\sigma) \]
converges to $0$, in the weak topology.
Indeed, by Lemma \ref{lemma: equivariant state}, we have
\begin{eqnarray*}
&&
\lim_\nu \mathrm{Tr}(\left\{t_\nu \pi_\infty(h_\sigma) - \pi_\infty(v_\sigma^*) t_\nu \pi_\infty(v_\sigma)\right\} c) \\
&=&
\lim_\nu \mathrm{Tr}(t_\nu \pi_\infty(h_\sigma) c)
-
\lim_\nu \mathrm{Tr}(t_\nu \pi_\infty(v_\sigma) c \pi_\infty(v_\sigma)^*)\\
&=&
\psi(\pi_\infty(h_\sigma) c) - \psi(\pi_\infty(v_\sigma) c \pi_\infty(v_\sigma)^*) = 0,
\quad c \in \B(\Hil_\infty).
\end{eqnarray*}
For every $\nu \in I$, the weak closure of the set of convex combinations
\[\mathrm{conv} \{(t_\mu \pi_\infty(h_\sigma) - \pi_\infty(v_\sigma^*) t_\mu \pi_\infty(v_\sigma))_\sigma \ |\ \mu \in I, \nu \le \mu\} \subset \bigoplus_{\sigma \in \Sigma} \B(\Hil_\infty)_*\]
contains $(0)_{\sigma \in \Sigma}$.
By the Hahn--Banach theorem, the weak closure of the convex set coincides with its norm closure.
It follows that for every $\nu \in I$, there exists a density operator
$s_\nu \in \mathrm{conv} \{t_\mu \ |\ \mu \in I, \nu \le \mu\}$
such that the net $(s_\nu)_{\nu \in I}$ converges to $\psi$ in the weak-$*$ topology of $\B(\Hil_\infty)^*$ and that \[s_\nu \pi_\infty(h_\sigma) - \pi_\infty(v_\sigma^*) s_\nu \pi_\infty(v_\sigma) \]
converges to $0$, in the trace $1$-norm for every $\sigma \in \Sigma$.
\end{proof}

\begin{lemma}
\label{lemma: description of s}
Let $s$ be a finite sum of rank one operators of the form $\pi_\infty(a) e \pi_\infty(a)^*$, where $a \in \cup_R E_R$.
Then there exists a positive element $\eta$ of $\ell_\infty(\sqcup_n V_n^2)$
satisfying the following conditions:
\begin{enumerate}
\item
The support of $\eta$ is controlled.
\item
For every finite family $\mathcal{F}$ of elements of $\ell_\infty(\sqcup_n V_n^2)$,
there exists an increasing sequence $n(k)$ of natural numbers such that
for every $\zeta \in \mathcal{F}$,
\[\lim_{k \to \infty} \sum_{x, y \in V_{n(k)}} \zeta(x, y) \eta(x, y) \xi_{n(k)}(y)^2 = \mathrm{Tr}(s \varpi_\infty(\zeta)).\]
\end{enumerate}
\end{lemma}

\begin{proof}
Suppose that $s$ is of the form
$s = \sum_{l = 1}^m \pi_\infty(a^{(l)}) e \pi_\infty(a^{(l)})^*$.
Describe $a^{(l)}$ by
$a^{(l)} = \left( \left[ a^{(l)}_{n, x, y} \right]_{x, y \in V_n} \right)_{n \in \N}$.
Define $\eta \in \ell_\infty (V)$ by
$\eta(x, y) = \sum_{l = 1}^m \left| a^{(l)}_{n, x, y} \right|^2, (x, y \in V_n^2)$.
The support of $\eta$ is controlled, since the propagation of $a^{(l)}$ is finite.
For every $\zeta \in \ell_\infty \left( \sqcup_n V_n^2 \right)$, 
$\mathrm{Tr}(s \varpi_\infty(\zeta))$ is equal to
\begin{eqnarray*}
\sum_{l = 1}^m \left\langle \varpi_\infty(\zeta) \pi_\infty 
\left( a^{(l)} \right) \Xi_\infty,  \pi_\infty \left( a^{(l)} \right) \Xi_\infty \right\rangle
=
\sum_{l = 1}^m \phi_\infty \left( a^{(l)*} M[\zeta] \left( a^{(l)} \right) \right).
\end{eqnarray*}
Let $\mathcal{F}$ be an arbitrary finite family of elements of $\ell_\infty \left( \sqcup_n V_n^2 \right)$.
By the definition of $\phi_\infty$, there exists a subsequence $(\phi_{n(k)})$ satisfying that
for every $\zeta \in \mathcal{F}$,
\[
\mathrm{Tr}(s \varpi_\infty(\zeta)) =
\lim_{k \to \infty} \sum_{l = 1}^m \phi_{n(k)} \left( a^{(l)*} M[\zeta] \left( a^{(l)} \right) \right).\]
Recalling the definition of $\phi_{n(k)}$,
we have
\begin{eqnarray*}
\sum_{l = 1}^m \phi_{n(k)} \left( a^{(l)*} M[\zeta] \left( a^{(l)} \right) \right)
&=&
\sum_{l = 1}^m \sum_{y \in V_{n(k)}} \left\langle M[\zeta]\left( a^{(l)} \right) \xi_{n(k)}(y) \delta_y, 
a^{(l)} \xi_{n(k)}(y) \delta_y \right\rangle\\
&=&
\sum_{l = 1}^m \sum_{y \in V_{n(k)}} \sum_{x \in V_{n(k)}} \zeta(x, y) 
\left| a^{(l)}_{n(k), x, y} \right|^2 \xi_{n(k)}(y)^2\\
&=&
\sum_{y \in V_{n(k)}} \sum_{x \in V_{n(k)}} \zeta(x, y) \eta(x, y) \xi_{n(k)}(y)^2.
\end{eqnarray*}
\end{proof}

\begin{lemma}
\label{lemma: approximation by ell_1}
Let $\Sigma$ be a finite family of controlled partially defined injections on $V$.
Let $F$ be a finite family of elements in $\ell_\infty (V)$.
Let $\epsilon$ be a positive number.
For arbitrary $\Sigma$, $F$ and $\epsilon$, there exist 
\begin{itemize}
\item
a positive number $S$,
\item
an increasing sequence $(n(k))_k$ of natural numbers,
\item
and probability measures $p_{n(k)} \in \ell_1 \left( V_{n(k)}^2 \right)_+$ on $V_{n(k)}^2$,
\end{itemize}
satisfying the following conditions:
\begin{enumerate}
\item
For every $k \in \N$, and for every $x, y \in V_{n(k)}$, if $d_{n(k)}(x, y) > S$, then $p_{n(k)} (x, y) = 0$.
\item
For every $f \in F$, and for every $k \in \N$,
\[
\left| \sum_{x \in V_{n(k)}}
\sum_{y \in V_{n(k)}}
f(x) p_{n(k)}(x, y) - \phi_\infty(f) \right| < \epsilon.
\]
\item
For every $\sigma = (\sigma_n)_n$ in $\Sigma$,
and for every $k \in \N$,
\[
\left\| 
(\sigma_n \times \mathrm{id})^* \left( p_{n(k)} \cdot \chi \left( \mathrm{image} \left( \sigma_{n(k)} \right) \times V_{n(k)} \right) \right) - p_{n(k)} \cdot \iota(h_\sigma) 
\right\|_1 
< \epsilon.\]
Here, $\chi(\cdot)$ stands for the definition function, and  $\|\cdot\|_1$ stands for sum of absolute values of entries.
\end{enumerate}
\end{lemma}
\begin{proof}
By Lemma \ref{lemma: good density operators},
there exists a density operator $s$ such that
\begin{enumerate}
\item[(i)]
$s$ is a finite sum of $\pi_\infty(a) e \pi_\infty(a)^*$, where $a \in \cup_R E_R$;
\item[(ii)]
for every $f \in F$, 
\[
|\mathrm{Tr}(s \varpi_\infty(\iota(f))) - \phi_\infty(f)| 
=
|\mathrm{Tr}(s \pi_\infty(f)) - \psi(\pi_\infty(f))| 
< \epsilon;\]
\item[(iii)]
for every $\sigma \in \Sigma$,
$\mathrm{Tr} (|s \pi_\infty(h_\sigma) - \pi_\infty(v_\sigma^*) s \pi_\infty(v_\sigma) |) < \epsilon$.
\end{enumerate}
By Lemma \ref{lemma: description of s},
there exists a positive element $\eta$ of $\ell_\infty(V)$
satisfying the following conditions:
\begin{enumerate}
\item[(a)]
The support of $\eta$ is controlled.
\item[(b)]
\label{item: subsequence}
For every finite family $\mathcal{F}$ of elements of $\ell_\infty(\sqcup_n V_n^2)$,
there exists an increasing sequence $n(k)$ of natural numbers such that
for every $\zeta \in \mathcal{F}$,
\[\lim_{k \to \infty} \sum_{x, y \in V_{n(k)}} \zeta(x, y) \eta(x, y) \xi_{n(k)}(y)^2 = \mathrm{Tr}(s \varpi_\infty(\zeta)).\]
\end{enumerate}
We define a sequence of positive functions $p_n \in \ell_1(V_n^2)_+$ by $p_n (x, y) = \eta(x, y) \xi_n(y)^2$.
Since the support of $\eta$ is controlled, the union of the supports of $(p_n)_n$ is also controlled.

We define $\mathcal{F}$ by the following collections:
\begin{itemize}
\item
The constant function $1$ on $\sqcup_n V_n^2$.
\item
Functions of the form $\iota(f)$, $f \in F$.
\item
For $\sigma = (\sigma_n) \in \Sigma$, choose
a function $\zeta_\sigma \in \ell_\infty(\sqcup_n \mathrm{dom}(\sigma_n) \times V_n)$ such that
$\|\zeta_\sigma\|_\infty = 1$
and that for every $n \in \N$,
\begin{eqnarray*}
&&
\| (\sigma_n \times \mathrm{id})^* (p_n \cdot \chi(\mathrm{image}(\sigma_n) \times V_n)) - p_n \cdot \iota(h_\sigma) \|_1\\
&=&
\sum_{x \in \mathrm{dom}(\sigma_n)} \sum_{y \in V_n} 
\left( 
p_n(\sigma_n(x), y) 
- 
p_n (x, y)  h_\sigma (x) \right) \zeta_\sigma(x, y).
\end{eqnarray*}
Add $(\sigma \times \mathrm{id})_* (\zeta_\sigma)$ and 
$\zeta_\sigma \cdot \iota(h_\sigma)$ to $\mathcal{F}$.
\end{itemize}
For this collection $\mathcal{F}$, choose an increasing sequence $n(k)$ of natural numbers satisfying the equation in condition (b).
By $1 \in \mathcal{F}$, and by condition (b),
we have
\begin{eqnarray*}
\lim_{k \to \infty}
\| p_{n(k)} \|_1
=
\lim_{k \to \infty} \sum_{x, y \in V_{n(k)}} \eta(x, y) \xi_{n(k)}(y)^2
= \mathrm{Tr}(s \varpi_\infty(1)) = 1.
\end{eqnarray*}
For $f \in F$, we have
\begin{eqnarray*}
&&
\lim_{k \to \infty}
\left| \sum_{x \in V_{n(k)}}
\sum_{y \in V_{n(k)}}
f(x) p_{n(k)} (x, y) - \phi_\infty(f) \right|\\
&=&
\lim_{k \to \infty}
\left| \sum_{x \in V_{n(k)}}
\sum_{y \in V_{n(k)}}
f(x) \eta(x, y) \xi_{n(k)}(y)^2 - \phi_\infty(f) \right|\\
&=&
\left| \mathrm{Tr}(s \varpi_\infty(\iota(f))) - \phi_\infty(f) \right|
<
\epsilon.
\end{eqnarray*}
At the inequality in the last line, we used condition (ii).
For every $\sigma \in \Sigma$, we have
\begin{eqnarray*}
&&
\lim_{k \to \infty}
\| (\sigma_{n(k)} \times \mathrm{id})^* (p_{n(k)} \cdot \chi(\mathrm{image}(\sigma_{n(k)}) \times V_{n(k)})) - p_{n(k)} \cdot \iota(h_\sigma) \|_1\\
&=&
\lim_{k \to \infty}
\sum_{x \in \mathrm{dom}(\sigma_{n(k)})} \sum_{y \in V_{n(k)}} 
\left( 
p_{n(k)}(\sigma_{n(k)}(x), y) 
- 
p_{n(k)} (x, y)  h_\sigma (x) \right) \zeta_\sigma(x, y)\\
&=&
\lim_{k \to \infty}
\sum_{x \in \mathrm{image}(\sigma_{n(k)})} \sum_{y \in V_{n(k)}} 
p_{n(k)}(x, y) \zeta_\sigma \left( \sigma_{n(k)}^{-1}(x), y \right) \\
&& \hspace{20mm} - 
\lim_{k \to \infty}
\sum_{x \in \mathrm{dom}(\sigma_{n(k)})} \sum_{y \in V_{n(k)}} 
p_{n(k)} (x, y)  h_\sigma (x) \zeta_\sigma(x, y)\\
&=&
\mathrm{Tr} (s \varpi_\infty((\sigma \times \mathrm{id})_* (\zeta_\sigma)) )
-
\mathrm{Tr} (s \varpi_\infty(\iota(h_\sigma) \cdot \zeta_\sigma)) ).
\end{eqnarray*}
At the equality in the last line, we used condition (b).
By Lemma \ref{lemma: normalizer}, we obtain
\begin{eqnarray*}
&&
\lim_{k \to \infty}
\| (\sigma_{n(k)} \times \mathrm{id})^* (p_{n(k)} \cdot \chi(\mathrm{image}(\sigma_{n(k)}) \times V_{n(k)})) - p_{n(k)} \cdot \iota(h_\sigma) \|_1\\
&=&
\mathrm{Tr} (s \pi_\infty(v_\sigma) \varpi_\infty(\zeta_\sigma) \pi_\infty(v_\sigma)^* )
-
\mathrm{Tr} (s \pi_\infty(h_\sigma) \varpi_\infty(\zeta_\sigma))\\
&\le&
\mathrm{Tr} (| \pi_\infty(v_\sigma)^* s \pi_\infty(v_\sigma) - s \pi_\infty(h_\sigma)|) \cdot \|\varpi_\infty(\zeta_\sigma)\|
<
\epsilon.
\end{eqnarray*}
At the last inequality, we used condition (iii). Therefore, if $k$ is large, then the sequence of probability measures $p_{n(k)} / \| p_{n(k)} \|_1$ satisfies the conditions in the lemma.
\end{proof}

\subsection{Another state $\omega_\infty$ of $\CstarU(V)$}
Let $\xi_n \in \ell_2(V_n)$ be the sequence of positive unit vectors satisfying the conditions in Subsection \ref{subsection: diagonal state at infinity}.
We have already studied a sequence of states $(\phi_n)_n$ of $\CstarU(V) \otimes_\mathrm{min} \CstarU(V)^\op$ given by the vectors
\[\Xi_n = \sum_{x \in V_n} \xi_n(x) \delta_{(x, x)} \in \ell_2(V_n^2).\]
Here, we consider another sequence of states $\omega_n = \langle \ \cdot \ \xi_n, \xi_n \rangle$ on $\CstarU(V)$ given by the vectors $\xi_n \in \ell_2(V_n)$.
The restriction of $\phi_n$ on the first tensor factor $\CstarU(V)$ and $\omega_n$ are {\it not} identical, but these restrictions on the diagonal subalgebra $\ell_\infty(V)$ are identical.
We can choose an accumulation point $\phi_\infty$ of $(\phi_n)_n \subset (\CstarU(V) \otimes_\mathrm{min} \CstarU(V)^\op)^*$ and an accumulation point $\omega_\infty$ of $(\omega_n)_n \subset \CstarU(V)^*$ such that these restrictions 
$\phi_\infty |_{\ell_\infty(V) \otimes \C}$ and 
$\omega_\infty |_{\ell_\infty(V)}$ on $\ell_\infty (V)$ are identical.
The following lemma implies that $\omega_\infty |_{\ell_\infty(V)}$ determines $\omega_\infty$.

\begin{lemma}
For every controlled partially defined injection $\sigma = (\sigma_n)_n$ on $V = \sqcup_n V_n$,
and for every $f \in \ell_\infty(V)$,
we have
$\omega_\infty(v_\sigma f) = \omega_\infty \left( \sqrt{h_\sigma} f \right)$.
\end{lemma}

\begin{proof}
For every $n \in \N$,
we have
\begin{eqnarray*}
\omega_n(v_\sigma f) = 
\sum_{x \in \mathrm{dom}(\sigma_n)} \xi_{\sigma_n(x)} \xi_x f(x)
=
\sum_{x \in \mathrm{dom}(\sigma_n)} \xi_x^2 \sqrt{h_\sigma(x)} f(x)
=
\omega_n \left( \sqrt{h_\sigma} f \right).
\end{eqnarray*}
Therefore we have $\omega_\infty(v_\sigma f) = \omega_\infty \left( \sqrt{h_\sigma} f \right)$.
\end{proof}

\subsection{Localization property of the sequence of states $(\omega_n)_n$ on $\CstarU(V)$}

For every $n \in \N$, and positive number $S$, we denote by $\Theta_{n, S}$ the unital completely positive map
\[\Theta_{n, S} \colon \B(\ell_2(V_n)) \to \prod_{y \in V_n} \B(\ell_2(N(y; S)))\]
given by compression $\Theta_{n, S}(a) = (P_{N(y; S)} a P_{N(y; S)})_{y \in V_n}$.

\begin{theorem}
\label{theorem: approximation}
Let $(G_n = (V_n, {\rm Edge}_n))_n$ be a sequence of finite connected graphs.
Suppose that $V = \sqcup_n V_n$ has bounded geometry.
Let $(\xi_n)_n$ be a sequence of unit vectors in $\ell_2(V_n)$.
Let $\mathcal{G}$ be a finite family of elements in the uniform Roe algebra $\CstarU(V)$.
Let $\epsilon$ be a positive number.
If $\CstarU(V)$ is locally reflexive,
then
for arbitrary $(\xi_n)_n$, $\mathcal{G}$, and $\epsilon$,
there exists a positive number $S$, an increasing sequence of natural numbers $(n(k))_k$, and states $\widehat{\omega}_{n(k)}$ on $\prod_{y \in V_{n(k)}} \B(\ell_2(N(y; S)))$
such that
\[\left| \widehat{\omega}_{n(k)} (\Theta_{n(k), S}(a)) - \langle a \xi_{n(k)}, \xi_{n(k)} \rangle \right| < \epsilon, \quad a \in \mathcal{G}.\]
\end{theorem}
In this theorem, we do not need any assumption on the unit vectors $\xi_n \in \ell_2(V_n)$.

\begin{proof}
We first reduce our task as follows.
\begin{enumerate}
\item
We may assume that all the elements of $\mathcal{G}$ have finite propagation.
\item
Adding edges to $G_n$,
we may further assume that the propagation of elements of $\mathcal{G}$ is at most $1$.
\item
There exist a finite family $G$ of $\ell_\infty(V)$ and a finite family $\Sigma$ of partially defined injections on $V$ such that for every $\sigma \in \Sigma$, the propagation of $v_\sigma$ is at most $1$ and that the linear span of
$\{v_\sigma f \ |\ \sigma \in \Sigma, f \in G\}$
contains $\mathcal{G}$.
We replace $\mathcal{G}$ with $\{v_\sigma f \ |\ \sigma \in \Sigma, f \in G\}$.
\item
We may assume that
there exists a positive number $\Lambda$ such that for every $n \in \N$ and 
for every $(x, y) \in {\rm Edge}_n$,
\[|\xi_n(y)|^2 \le \Lambda |\xi_n(x)|^2.\]
\item
We may assume that
for every $n$, the entries of $\xi_n$ are positive.
\item
We may assume that for $v_\sigma f \in \mathcal{G}$, the sequence $(\langle v_\sigma f \xi_n, \xi_n \rangle)_n$ converges.
\end{enumerate}
We prove that the above reduction processes are possible as follows.
\begin{enumerate}
\item
All the elements of $\mathcal{G}$ can be approximated by some operators with finite propagation.
\item
For every propagation $1 \le R$,
define new edge sets ${\rm Edge}_n^{(R)}$ by
\[{\rm Edge}_n^{(R)} = \{(x, y) \in V_n \ |\ d_n(x, y) \le R\}.\]
The sequence of graphs $\left( \left( V_n, {\rm Edge}_n^{(R)} \right) \right)_n$ also has bounded geometry.
The uniform Roe algebras of these sequences of vertex sets are identical.
\item
Every controlled subset of $\sqcup_n V_n$ is a finite union of graphs of controlled partially defined injections, by \cite[Lemma 4.10]{RoeLectureNote}.
\item
It suffices to show that there exists a subset $W_n$ of $\mathrm{supp}(\xi_n)$ such that
$\xi_n |_{W_n}$ is uniformly close to $\xi_n$ in $\ell_2$-norm and that the derivatives 
$|\xi_n (y)| / |\xi_n (x)|$ defined on $(x, y) \in \mathrm{Edge}_n^{(R)}$ are uniformly bounded.

Let $(V, \mathrm{Edge})$ be a finite connected graph. Let $\xi$ be a unit vector in $\ell_2(V)$ whose support is $V$.
Let $\epsilon$ be an arbitrary positive number.
Define $M$ by
\[\sup_{x \in V} \sharp \{y \in V \ |\ (x, y) \in \mathrm{Edge} \}.\]
Let $\Lambda$ be a positive constant larger than $M$ and $1$.
The constant $\Lambda$ will be determined by $\epsilon$ and $M$ later.
Define a relation $\ll$ on $V$ as follows:
\[x \ll y, \textrm{\ if\ } (x, y) \in \mathrm{Edge} \textrm{\ and\ }\Lambda |\xi(x)|^2 \le |\xi(y)|^2.\]
Define a sequence of disjoint subsets $(U(l))_l$ of $V$ as follows:
\begin{eqnarray*}
U(0) &=& \{y \in V \ |\ \exists x \in V, x \ll y, \lnot\exists z \in V, y \ll z\},\\
U(1) &=& \{y \in V \ |\ \exists z \in U(0), y \ll z\},\\
U(l + 1) &=& \{y \in V \ |\ \exists z \in U(l), y \ll z\} \setminus \cup_{k = 1}^l U(k).
\end{eqnarray*}
By the definitions of $\ll$ and $M$, we have
$\|\xi |_{U(l + 1)}\|^2 \le \frac{M}{\Lambda} \|\xi |_{U(l)}\|^2$.
It follows that
$\sum_{l = 1}^\infty \|\xi |_{U(l)}\|^2 \le \sum_{l = 1}^\infty \frac{M^l}{\Lambda^l} \|\xi |_{U(0)}\|^2
\le \frac{M}{\Lambda - M}$.
Define $\Lambda$ by $(1 + 1 / \epsilon) M$.
Define $W$ by $V \setminus \left(\cup_{l = 1}^\infty U(l)\right)$.
Thus we have
$\|\xi - \xi |_W \|^2 \le \sum_{l = 1}^\infty \|\xi |_{U(l)}\|^2 \le \epsilon$.
If $x, y \in W$, then $x \ll y$ does not hold.
This means that $|\xi(y)|^2 \le \Lambda |\xi(x)|^2$.

Apply the above procedure to $\mathrm{supp} (\xi_n)$. Replace $\mathrm{supp} (\xi_n)$ with $W_n$. 
We can use common $\Lambda$ for every $n$.
Replace $\xi$ with $\xi |_{W_n}$.
Replace $\mathcal{G} = \{v_\sigma f \ |\ \sigma \in \Sigma, f \in G\}$ with their restrictions on the new Hilbert space $\ell_2(\sqcup_n W_n)$.
The propagation of elements of the new finite family $\mathcal{G}$ is $1$.
\item
Multiplying a unitary operator $w$ in $\ell_\infty V$, we obtain a sequence of positive vectors $\xi_n \in \ell_2(V_n)$.
We also need to replace $\mathcal{G} = \{v_\sigma f \ |\ \sigma \in \Sigma, f \in G\}$ with their conjugations by $w v_\sigma f w^* = v_\sigma \widetilde{f}$.
\item
Replace $(V_n)_n$ with its subsequence such that $\lim_n \langle v_\sigma f \xi_n, \xi_n \rangle$ exists for every $v_\sigma f \in \mathcal{G}$.
\end{enumerate}

For the rest of the proof, we assume the above six items.
We make use of states $\phi_\infty$ and $\omega_\infty$ in the previous subsections.
Define $F \subset \ell_\infty(V)$ by 
$\left\{ \left. \sqrt{h_\sigma} f \ \right| \ \sigma \in \Sigma, f \in G \right\}$.
For $F$, $\Sigma$, and arbitrary $\epsilon > 0$, there exist 
\begin{itemize}
\item
a positive number $S$,
\item
an increasing sequence $(n(k))_k$ of natural numbers,
\item
and probability measures $p_{n(k)} \in \ell_1 \left( V_{n(k)}^2 \right)_+$ on $V_{n(k)}^2$,
\end{itemize}
satisfying the conditions in Lemma \ref{lemma: approximation by ell_1}.

We first claim that for every $k$, and for every $\sigma = (\sigma_n)_n \in \Sigma$,
\begin{eqnarray*}
\label{quantity}
\sum_{x \in \mathrm{dom}(\sigma_{n(k)})} \sum_{y \in V_{n(k)}}
\left|
\sqrt{h_\sigma(x)} p_{n(k)}(x, y) - \sqrt{p_{n(k)}(x, y)} \sqrt{p_{n(k)}(\sigma_{n(k)}(x), y)}
\right|
\end{eqnarray*}
is small. 
By condition $(4)$, $h_\sigma$ is at least $\Lambda^{-1}$.
The above quantity is 
equal to
\begin{eqnarray*}
\sum_{x, y}
\left|
\sqrt{h_\sigma(x)} \sqrt{p_{n(k)}(x, y)} - \sqrt{p_{n(k)}(\sigma_{n(k)}(x), y)}
\right| \sqrt{p_{n(k)}(x, y)}
\end{eqnarray*}
and
bounded by
\begin{eqnarray*}
&&
\sqrt{\Lambda}
\sum_{x, y}
\left|
\sqrt{h_\sigma(x)} \sqrt{p_{n(k)}(x, y)} - \sqrt{p_{n(k)}(\sigma_{n(k)}(x), y)}
\right| \sqrt{h_\sigma(x)} \sqrt{p_{n(k)}(x, y)} \\
&\le&
\sqrt{\Lambda}
\sum_{x, y}
\left|
\sqrt{h_\sigma(x)} \sqrt{p_{n(k)}(x, y)} - \sqrt{p_{n(k)}(\sigma_{n(k)}(x), y)}
\right| \\
&& \hspace{20mm} \times
\left(
\sqrt{h_\sigma(x)} \sqrt{p_{n(k)}(x, y)} + \sqrt{p_{n(k)}(\sigma_{n(k)}(x), y)}
\right) \\
&=&
\sqrt{\Lambda}
\sum_{x, y}
\left|
h_\sigma(x) p_{n(k)}(x, y) - p_{n(k)}(\sigma_{n(k)}(x), y)
\right|
< \sqrt{\Lambda} \epsilon.
\end{eqnarray*}
The equality in the last line is due to the last condition in Lemma \ref{lemma: approximation by ell_1}.

The limit $\lim_k \langle v_\sigma f \xi_{n(k)}, \xi_{n(k)} \rangle$ satisfies the following:
\begin{eqnarray*}
&&
\lim_k \langle v_\sigma f \xi_{n(k)}, \xi_{n(k)} \rangle
= \omega_\infty(v_\sigma f)
= \omega_\infty \left( \sqrt{h_\sigma} f \right)
= \phi_\infty \left( \sqrt{h_\sigma} f \right)
\\
&\sim_\epsilon&
\sum_{x \in \mathrm{dom}(\sigma_{n(k)})} \sum_{y \in V_{n(k)}}
\sqrt{h_\sigma(x)} f(x) p_{n(k)}(x, y)\\
&\sim_{\sqrt{\Lambda} \epsilon \|f\|_\infty}&
\sum_{x \in \mathrm{dom}(\sigma_{n(k)})} \sum_{y \in V_{n(k)}}
f(x) \sqrt{p_{n(k)}(x, y)} \sqrt{p_{n(k)}(\sigma_{n(k)}(x), y)}.
\end{eqnarray*}
Here $\sim_\square$ means that the difference between the left hand side and the right hand side is at most $\square$.
Define a unit vector $(q_y)_y$ in $\bigoplus_{y \in V_{n(k)}} \ell_2(N(y, S))$ by
$q_y = \sum_x \sqrt{p_{n(k)}(x, y)} \delta_x$.
For every $x, y$, we have
\begin{eqnarray*}
f(x) \sqrt{p_{n(k)}(x, y)} \sqrt{p_{n(k)}(\sigma_{n(k)}(x), y)}
=
\left\langle v_\sigma f \sqrt{p_{n(k)}(x, y)} \delta_x, q_y \right\rangle.
\end{eqnarray*}
For every $y$, we have
\begin{eqnarray*}
\sum_x
f(x) \sqrt{p_{n(k)}(x, y)} \sqrt{p_{n(k)}(\sigma_{n(k)}(x), y)}
=
\left\langle v_\sigma f q_y, q_y \right\rangle.
\end{eqnarray*}
Therefore we have
\[\lim_k \langle v_\sigma f \xi_{n(k)}, \xi_{n(k)} \rangle \quad
\sim_{\sqrt{\Lambda} \epsilon \|f\|_\infty + \epsilon}
\quad
\sum_{y \in V_{n(k)}} \left\langle v_\sigma f q_y, q_y \right\rangle.
\]
Let $\widehat{\omega}_{n(k)}$ be the vector state on $\prod_{y \in V_{n(k)}} \B(\ell_2(N(y; S)))$ given by the unit vector $(q_y)_y \in \bigoplus_{y \in V_{n(k)}} \ell_2(N(y, S))$.
The right hand side $\sum_{y \in V_{n(k)}} \left\langle v_\sigma f q_y, q_y \right\rangle$
is equal to $\widehat{\omega}_{n(k), S}( \Theta_{n(k)}(v_\sigma f))$.
\end{proof}

\section{Main theorem}

\begin{theorem}\label{theorem: main}
For a metric space $X$ with bounded geometry,
if the uniform Roe algebra $\CstarU(X)$ is locally reflexive, then $X$ has the operator norm localization property.
\end{theorem}

As a consequence, we obtain the main theorem in Section 1.

\begin{proof}
Suppose that $X$ does not have the operator norm localization property
and that $\CstarU(X)$ is locally reflexive.
By Lemma \ref{lemma: not ONLP},
there exist:
\begin{itemize}
\item
a sequence of disjoint subsets $V_n$ of $X$;
\item
a sequence of positive matrices $b_n$ acting on $\ell_2(V_n)$ with norm $1$;
\item
a sequence $S_n$ of positive numbers
\end{itemize}
satisfying that:
\begin{enumerate}
\item\label{item: not localized}
for every $n \in \N$ and $Y \subset V_n$, if $\mathrm{diam}(Y) \le S_n$, 
then $\| b_n |_{\ell_2(Y)} \| < 1/ 3$;
\item
$\lim_n S_n = \infty$;
\item
the propagation of $b := \sum_n b_n$ is finite.
\end{enumerate}
Denote by $R$ the propagation of $b$.

Define a sequence of graphs $\left( G_n^{(R)} = \left( V_n, \mathrm{Edge}_n^{(R)} \right) \right)$ by
\[\mathrm{Edge}_n^{(R)} = \{(x, y) \in V_n \ |\ d(x, y) \le R \}.\]
In the case that $G_n^{(R)}$ is not connected, choose a connected component $W_n \subset V_n$, such that the norm of $b_n P_{W_n}$ is $1$. 
Replace $V_n$ with $W_n$. Replace $b$ with $b_n P_{W_n}$.
Denote by $d_n$ the graph metric on $V_n$ defined by $\mathrm{Edge}_n^{(R)}$.
Since $d | _{V_n \times V_n} \le R d_n$,
the uniform Roe algebra $\CstarU(\sqcup_n (V_n, d_n))$ is a non-unital subalgebra of
$\CstarU(X)$.
Therefore $\CstarU(\sqcup_n (V_n, d_n))$ is also locally reflexive.
Note that $b$ is an element of the subalgebra.

Since $b_n$ are positive semidefinite matrices with norm $1$,
there exists a unit vector $\xi_n \in \ell_2(V_n)$ satisfying that $b_n \xi_n = \xi_n$.
By Theorem \ref{theorem: approximation},
there exist a positive constant $S$, infinitely many natural numbers $n$, and states $\widehat{\omega}_n$ on $\prod_{y \in V_n} \B(\ell_2(N(y; S)))$ such that
\[\quad
\left| \widehat{\omega}_n (\Theta_{n, S}(b)) - \langle b \xi_n, \xi_n \rangle \right| < 1/3.\]
Choosing large $n$, we also have $S < S_n$.
Since $\langle b \xi_n, \xi_n \rangle = 1$, we have
\begin{eqnarray*}
\left\| \Theta_{n, S}(b) \right\| \ge \left| \widehat{\omega}_n (\Theta_{n, S}(b)) \right| > 2/3.
\end{eqnarray*}
This inequality contradicts condition (\ref{item: not localized}).
\end{proof}

\bibliographystyle{amsalpha}
\bibliography{RoeAlg.bib}

\providecommand{\bysame}{\leavevmode\hbox to3em{\hrulefill}\thinspace}
\providecommand{\MR}{\relax\ifhmode\unskip\space\fi MR }
\providecommand{\MRhref}[2]{%
  \href{http://www.ams.org/mathscinet-getitem?mr=#1}{#2}
}
\providecommand{\href}[2]{#2}
\begin{thebibliography}{CTWY08}

\bibitem[ADR00]{ADR}
C.~Anantharaman-Delaroche and J.~Renault, \emph{Amenable groupoids}, Monogr.
  Enseign. Math., vol.~36, L'Enseignement Math\'ematique, Geneva, 2000, with a
  foreword by G. Skandalis and Appendix B by E. Germain.

\bibitem[BNW07]{PaperBrodzkiNibloWright}
J.~Brodzki, G.~A. Niblo, and N.~J. Wright, \emph{Property {A}, partial
  translation structures, and uniform embeddings in groups}, J. Lond. Math.
  Soc. (2) \textbf{76} (2007), no.~2, 479--497.

\bibitem[BO08]{OzawaBook}
N.~P. Brown and N.~Ozawa, \emph{{$C^*$}-algebras and finite-dimensional
  approximations}, Graduate Studies in Mathematics, vol.~88, American
  Mathematical Society, Providence, RI, 2008.

\bibitem[Con76]{ConnesInjectiveFactors}
A.~Connes, \emph{Classification of injective factors. {C}ases {$II_{1},$}
  {$II_{\infty },$} {$III_{\lambda },$} {$\lambda \not=1$}}, Ann. of Math. (2)
  \textbf{104} (1976), no.~1, 73--115. \MR{0454659}

\bibitem[CTWY08]{ONLPoriginal}
X.~Chen, R.~Tessera, X.~Wang, and G.~Yu, \emph{Metric sparsification and
  operator norm localization}, Adv. Math. \textbf{218} (2008), no.~5,
  1496--1511.

\bibitem[Dyk04]{Dykema}
Kenneth~J. Dykema, \emph{Exactness of reduced amalgamated free product
  {$C^*$}-algebras}, Forum Math. \textbf{16} (2004), no.~2, 161--180.

\bibitem[EH85]{EffrosHaagerup}
E.~G. Effros and U.~Haagerup, \emph{Lifting problems and local reflexivity for
  {$C^\ast$}-algebras}, Duke Math. J. \textbf{52} (1985), no.~1, 103--128.

\bibitem[F{\o}l55]{Folner}
E.~F{\o}lner, \emph{On groups with full {B}anach mean value}, Math. Scand.
  \textbf{3} (1955), 243--254.

\bibitem[GHW05]{GuentnerHigsonWeinberger}
E.~Guentner, N.~Higson, and S.~Weinberger, \emph{The {N}ovikov conjecture for
  linear groups}, Publ. Math. Inst. Hautes \'Etudes Sci. (2005), no.~101,
  243--268.

\bibitem[Gro93]{Gromov}
M.~Gromov, \emph{Asymptotic invariants of infinite groups}, Geometric group
  theory, {V}ol.\ 2 ({S}ussex, 1991), London Math. Soc. Lecture Note Ser., vol.
  182, Cambridge Univ. Press, Cambridge, 1993, pp.~1--295.

\bibitem[HR00]{HigsonRoe}
N.~Higson and J.~Roe, \emph{Amenable group actions and the {N}ovikov
  conjecture}, J. Reine Angew. Math. \textbf{519} (2000), 143--153.

\bibitem[Kir94]{KirchbergExactUHF}
E.~Kirchberg, \emph{Commutants of unitaries in {UHF} algebras and functorial
  properties of exactness}, J. Reine Angew. Math. \textbf{452} (1994), 39--77.

\bibitem[Kir95]{KirchbergCAR}
\bysame, \emph{On subalgebras of the {CAR}-algebra}, J. Funct. Anal.
  \textbf{129} (1995), no.~1, 35--63.

\bibitem[KW99]{KirchbergWassermann}
E.~Kirchberg and S.~Wassermann, \emph{Exact groups and continuous bundles of
  {$C^*$}-algebras}, Math. Ann. \textbf{315} (1999), no.~2, 169--203.

\bibitem[Lan73]{Lance}
C.~Lance, \emph{On nuclear {$C^{\ast} $}-algebras}, J. Funct. Anal. \textbf{12}
  (1973), 157--176.

\bibitem[Oza00]{OzawaExactness}
N.~Ozawa, \emph{Amenable actions and exactness for discrete groups}, C. R.
  Acad. Sci. Paris S\'er. I Math. \textbf{330} (2000), no.~8, 691--695.

\bibitem[Oza06]{BoundaryAme}
\bysame, \emph{Boundary amenability of relatively hyperbolic groups}, Topology
  Appl. \textbf{153} (2006), no.~14, 2624--2630.

\bibitem[Roe03]{RoeLectureNote}
J.~Roe, \emph{Lectures on coarse geometry}, University Lecture Series, vol.~31,
  American Mathematical Society, Providence, RI, 2003.

\bibitem[RW14]{RoeWillett}
John Roe and Rufus Willett, \emph{Ghostbusting and property {A}}, J. Funct.
  Anal. \textbf{266} (2014), no.~3, 1674--1684.

\bibitem[Sak14]{SakoONLP}
H.~Sako, \emph{Property {A} and the operator norm localization property for
  discrete metric spaces}, J. Reine Angew. Math. \textbf{690} (2014), 207--216.

\bibitem[STY02]{SkandalisTuYu}
G.~Skandalis, J.-L. Tu, and G.~Yu, \emph{The coarse {B}aum-{C}onnes conjecture
  and groupoids}, Topology \textbf{41} (2002), no.~4, 807--834.

\bibitem[Tak70]{TomitaTakesaki}
M.~Takesaki, \emph{Tomita's theory of modular {H}ilbert algebras and its
  applications}, Lecture Notes in Mathematics, Vol. 128, Springer-Verlag,
  Berlin, 1970.

\bibitem[Tu01]{Tu}
Jean-Louis Tu, \emph{Remarks on {Y}u's ``property {A}'' for discrete metric
  spaces and groups}, Bull. Soc. Math. France \textbf{129} (2001), no.~1,
  115--139.

\bibitem[Yu00]{Yu:CoarseHilbert}
G.~Yu, \emph{The coarse {B}aum-{C}onnes conjecture for spaces which admit a
  uniform embedding into {H}ilbert space}, Invent. Math. \textbf{139} (2000),
  no.~1, 201--240.

\end{thebibliography}

\end{document}